\def\msy{\mathbb}
\def\bbbr{{\msy R}}
\def\bbbc{{\msy C}}
\def\bbbq{{\msy Q}}
\def\bbbn{{\msy N}}
\def\bbbp{{\msy P}}
\def\bbbz{{\msy Z}}
\def\bbbf{{\msy F}}
\def\bbbg{{\msy G}}
\def\boldalpha{\bm{\alpha}}
\def\boldbeta{\bm{\beta}}
\def\v#1{{\bf#1}}
\def\is{\equiv}
\def\q{{\mathbbm{q}}}
\def\p{{\mathfrak{p}}}
\def\P{{\mathfrak{P}}}
\def\mod#1{\ ({\rm mod}\ #1)}
\def\fref#1{{\rm(\ref{#1})}}
\def\fqstar{\bbbf_q^{\times}}
\newenvironment{proof}[1]{\noindent {\bf Proof #1: }}{\hfill\break\medskip\hfill$\Box$\medskip}
\newtheorem{theorem}[subsection]{Theorem}
\newtheorem{definition}[subsection]{Definition}
\newtheorem{lemma}[subsection]{Lemma}
\newtheorem{corollary}[subsection]{Corollary}
\newtheorem{proposition}[subsection]{Proposition}
\newtheorem{assumption}[subsection]{Assumption}
\def\v#1{{\bf #1}}
\title{Finite hypergeometric functions}
\author{Frits Beukers, Henri Cohen, Anton Mellit}
\date{September 20}
\begin{document}
\maketitle
\medskip

\centerline{\it On the occasion of Eduard Looijenga's 69-th birthday.}
\medskip

\abstract{We show that values of finite hypergeometric functions defined over
$\bbbq$ correspond to point counting results on explicit varieties defined
over finite fields.}

\parindent=0pt

\section{Introduction}
Finite hypergeometric functions were introduced independently, and with
different motivations, by John Greene \cite{greene} and Nick Katz \cite[p.258]{katz}
by the end of the 1980's.
They are complex-valued functions on finite fields and an analogue of the classical
analytic hypergeometric functions in one variable, also known as Thomae's functions.
To display the analogy, we first recall the definition of the analytic hypergeometric functions.

Consider two multisets
(sets with possibly repeating elements) $\boldalpha=(\alpha_1,\ldots,\alpha_d)$
and $\boldbeta=(\beta_1,\ldots,\beta_d)$, where $\alpha_i,\beta_j\in\bbbq$ for all
$i,j$. We assume for the moment that none of the $\alpha_i,\beta_j$ is in $\bbbz_{\le0}$
and $\beta_d=1$.
The hypergeometric function with parameter sets $\boldalpha,\boldbeta$ is
the analytic function defined by the power series in $z$
$$_dF_{d-1}(\boldalpha,\boldbeta|z)=\sum_{n=0}^{\infty}
{(\alpha_1)_n\cdots(\alpha_d)_n\over(\beta_1)_n\cdots(\beta_d)_n}\ z^n\;.$$
Here $(x)_n=x(x+1)\cdots(x+n-1)$ denotes the so-called Pochhammer symbol
or rising factorial. Using the $\Gamma$-function we can rewrite the series as
$${\Gamma(\beta_1)\cdots\Gamma(\beta_d)\over\Gamma(\alpha_1)\cdots\Gamma(\alpha_d)}
\sum_{n=0}^{\infty}{\Gamma(\alpha_1+n)\cdots\Gamma(\alpha_d+n)\over\Gamma(\beta_1+n)
\cdots\Gamma(\beta_d+n)}\ z^n\;.$$
In order to display the similarity with the upcoming definition of finite hypergeometric
functions, using the identity $\Gamma(x)\Gamma(1-x)=\pi/\sin(\pi x)$ we rewrite this as
$$\sum_{n=0}^{\infty}\prod_{i=1}^d\left({\Gamma(\alpha_i+n)(1-\beta_i-n)
\over\Gamma(\alpha_i)\Gamma(1-\beta_i)}\right)\ (-1)^{dn}z^n\;.$$
If $\beta$ is a positive integer we interpret $\Gamma(1-\beta-n)/\Gamma(1-\beta)$ as
$(-1)^n/(n+\beta-1)!$, the limit of $\Gamma(1-x-n)/\Gamma(1-x)$ as $x\to\beta$.

We now define finite hypergeometric functions. Again take two multisets $\boldalpha$
and $\boldbeta$, each consisting of $d$ elements in $\bbbq$. Assume from now on that
$\alpha_i\not\is\beta_j\mod{\bbbz}$ for all $i,j$. In the analytic case this condition
is equivalent to the irreducibility of the hypergeometric differential equation. In the finite case
we avoid certain degeneracies with this assumption. We need not assume $\beta_d=1$ any more.

Let $\bbbf_q$ be the finite field with $q$ elements.
Let $\psi_q$ be a non-trivial additive character on $\bbbf_q$
which we fix once and for all throughout this paper. For any multiplicative character
$\chi:\fqstar\to\bbbc^{\times}$ we define the Gauss sum
$$g(\chi)=\sum_{x\in\fqstar}\chi(x)\psi_q(x)\;.$$
Let $\omega$ be a generator of the character group on $\fqstar$
which we also fix throughout the paper. We use the notation
$g(m)=g(\omega^m)$ for any $m\in\bbbz$. Note that $g(m)$ is periodic in $m$ with
period $q-1$.
Very often we shall need characters on $\fqstar$ of a given order.
For that we use the notation $\q=q-1$ so that a character of order $d$ can be given by
$\omega^{\q/d}$ for example, provided that $d$ divides $\q$ of course.

Now we define finite hypergeometric sums. Let again $\boldalpha$ and $\boldbeta$ be multisets
of $d$ rational numbers each, and disjoint modulo $\bbbz$.
Suppose in addition that $q$ is such that
$$(q-1)\alpha_i,(q-1)\beta_j\in\bbbz$$
for all $i$ and $j$.

\begin{definition}[Finite hypergeometric sum]\label{definitionHq}
Keep the above notation. We define for any $t\in\bbbf_q$,
$$H_q(\boldalpha,\boldbeta|t)={1\over 1-q}\sum_{m=0}^{q-2}
\prod_{i=1}^d\left({g(m+\alpha_i\q)g(-m-\beta_i\q)
\over g(\alpha_i\q)g(-\beta_i\q)}\right)\ \omega((-1)^dt)^m\;.$$
\end{definition}

Note the analogy with the analytic hypergeometric function.
These sums were considered without the normalizing factor $(\prod_{i=1}^dg(\alpha_i\q)g(-\beta_i\q))^{-1}$
by Katz in \cite[p258]{katz}. Greene, in \cite{greene}, has a definition involving Jacobi sums which, after some
elaboration, amounts to
$$\omega(-1)^{|\boldbeta|\q}q^{-d}\prod_{i=1}^d{g(\alpha_i\q)g(-\beta_i\q)\over g(\alpha_i\q-\beta_i\q)}
\ H_q(\boldalpha,\boldbeta|t)\;,$$
where $|\boldbeta|=\beta_1+\cdots+\beta_d$.
The normalization we adopt in this paper coincides with that of McCarthy, \cite[Def 3.2]{mccarthy}.

It took some time before people realized that Greene's finite hypergeometric functions were closely related
to point counting results on algebraic varieties over finite fields. As an example we mention the following
result, adapted to our notation.

\begin{theorem}[K.~Ono, 1998]\label{onotheorem}
Let $q$ be an odd prime power and $\lambda\in\bbbf_q$ and $\lambda\ne0,1$. Let $E_{\lambda}$ be the projective
elliptic curve given by the affine equation $y^2=x(x-1)(x-\lambda)$ and $E_{\lambda}(\bbbf_q)$
the set of $\bbbf_q$-rational points (including infinity). Then
$$|E_{\lambda}(\bbbf_q)|=q+1-(-1)^{(q-1)/2}H_q(1/2,1/2;1,1|\lambda)\;.$$
\end{theorem}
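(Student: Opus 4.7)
\medskip

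My plan is to start from the standard point count
$$|E_{\lambda}(\bbbf_q)|=q+1+\sum_{x\in\bbbf_q}\phi(x(x-1)(x-\lambda)),$$
where $\phi=\omega^{\q/2}$ is the quadratic character and we put $\phi(0)=0$; the $+1$ accounts for the point at infinity. Setting $\epsilon=\phi(-1)=(-1)^{(q-1)/2}$, the theorem reduces to the identity
$$S:=\sum_{x\in\bbbf_q}\phi(x(x-1)(x-\lambda))=-\epsilon\,H_q(1/2,1/2;1,1|\lambda).$$
The substitution $x=1/u$ gives the more symmetric form $S=\sum_{u\in\fqstar}\phi(u)\phi(1-u)\phi(1-\lambda u)$, which is the shape one wants in order to introduce Jacobi sums.

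Next I would apply multiplicative Fourier inversion on $\fqstar$ to the factor $\phi(1-u)$. The Fourier coefficients are $\sum_{u}\chi(u)\phi(1-u)=J(\chi,\phi)$ for $\chi\ne\chi_0$ and $-1$ for $\chi=\chi_0$, so
$$\phi(1-u)=\frac{1}{q-1}\Bigl(-1+\sum_{\chi\ne\chi_0}J(\chi,\phi)\overline\chi(u)\Bigr).$$
Plugging this in, the ``$-1$'' term produces a sum of the form $\sum_u\phi(u)\phi(1-\lambda u)$, which after completing the square evaluates to $-\epsilon\phi(\lambda)$. For the character term, the change of variable $v=\lambda u$ rewrites the inner sum as $\overline{(\overline\chi\phi)}(\lambda)\,J(\overline\chi\phi,\phi)=\chi(\lambda)\phi(\lambda)\,J(\overline\chi\phi,\phi)$ when $\chi\ne\phi$, while the exceptional $\chi=\phi$ contributes a constant via $J(\phi,\phi)=-\epsilon$. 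Converting $J(\chi,\phi)J(\overline\chi\phi,\phi)$ into Gauss sums and simplifying using $g(\chi)g(\overline\chi)=\chi(-1)q$ and $g(\phi)^2=\epsilon q$ collapses the product to $q\,g(\chi)^2/g(\chi\phi)^2$. Writing $\chi=\omega^m$, one finally obtains
$$S=\frac{\epsilon(1+\phi(\lambda))}{q-1}+\frac{q\,\phi(\lambda)}{q-1}\sum_{m\ne 0,\q/2}\frac{g(m)^2}{g(m+\q/2)^2}\,\omega(\lambda)^m.$$

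Independently, I would expand $H_q(1/2,1/2;1,1|\lambda)$ straight from Definition~\tref{definitionHq}. Since $\beta_i\q\equiv 0\pmod{\q}$, the factor $g(-m-\beta_i\q)$ simplifies to $g(-m)$, so the summand becomes $(g(m+\q/2)g(-m))^2/g(\phi)^2$ times $\omega(\lambda)^m$. I would split off the two degenerate indices $m=0$ and $m=\q/2$ (where $g(0)=-1$ and $g(\q/2)=g(\phi)$ conspire to give constants $\epsilon q$ and $\epsilon q\phi(\lambda)$), and on the remaining range use $g(-m)=(-1)^m q/g(m)$ to get $g(-m)^2=q^2/g(m)^2$. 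The outcome, after multiplying by $-\epsilon$, is
$$-\epsilon H_q=\frac{\epsilon(1+\phi(\lambda))}{q-1}+\frac{q}{q-1}\sum_{m\ne 0,\q/2}\frac{g(m+\q/2)^2}{g(m)^2}\,\omega(\lambda)^m.$$

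It remains to match the two main sums. The key observation is that the shift $m\mapsto m+\q/2$ sends $g(m)^2/g(m+\q/2)^2$ to $g(m+\q/2)^2/g(m)^2$ (using $g(m-\q/2)=g(m+\q/2)$ since these indices agree mod $\q$) and multiplies $\omega(\lambda)^m$ by $\omega(\lambda)^{-\q/2}=\phi(\lambda)^{-1}=\phi(\lambda)$. This exactly converts the sum appearing in $S$ into the one in $-\epsilon H_q$, concluding the proof. The principal obstacle in the argument is not any deep theorem but the bookkeeping: carefully accounting for the degenerate characters $\chi_0$ and $\phi$ in the Fourier expansion, correctly handling the Gauss sum identities at $m=0$ and $m=\q/2$, and tracking the sign $\phi(-1)$ through all substitutions so that the two boundary contributions on either side match up.
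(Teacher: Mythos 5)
Your argument is correct, and I could verify every step: the expansion of $H_q(1/2,1/2;1,1|\lambda)$ from Definition \tref{definitionHq} (the terms $m=0$ and $m=\q/2$ give $1$ and $\phi(\lambda)$ after dividing by $g(\phi)^2=\epsilon q$, and $g(-m)^2=q^2/g(m)^2$ elsewhere), the Fourier expansion of $\phi(1-u)$ with the degenerate coefficients at $\chi_0$ and $\chi=\phi$ handled by character sums rather than Gauss-sum ratios, the collapse $J(\chi,\phi)J(\overline{\chi}\phi,\phi)=q\,g(\chi)^2/g(\chi\phi)^2$, and the final shift $m\mapsto m+\q/2$, which picks up a factor $\phi(\lambda)$ that cancels the prefactor $\phi(\lambda)$ since $\phi(\lambda)^2=1$. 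However, this is a genuinely different route from the paper, which in fact gives no proof of this statement: it is quoted as Ono's 1998 result, and the authors explicitly remark that their main Theorem \tref{main} does not directly imply it, because the general construction attaches to the parameters $p_1=p_2=2$, $q_1=\cdots=q_4=1$ a threefold rather than the Legendre curve. The route they indicate instead is via the alternative varieties of Theorem \tref{legendrecount}, splitting $(2,2,-1,-1,-1,-1)$ into the two zero-sum blocks $(2,-1,-1)$ and $(2,-1,-1)$, which recovers the Legendre family; carrying that out to the stated formula would still require the Hasse--Davenport relation (Theorem \tref{hassedavenport}) to convert $g(2m)$ into $g(m)g(m+\q/2)/g(\q/2)$, together with an accounting of the boundary points of the completion. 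Your proof buys a short, self-contained, elementary verification of this one identity (quadratic-character point counting, Jacobi sums, Fourier inversion on $\fqstar$), while the paper's machinery buys generality, exhibiting Ono's theorem as one instance of the point-count interpretation of $H_q$ for arbitrary parameters defined over $\bbbq$.
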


In this paper we propose a generalization of this theorem which applies to all hypergeometric sums which
are {\it defined over $\bbbq$}. By that we mean that both $\prod_{j=1}^d(x-e^{2\pi i\alpha_j})$ and
$\prod_{j=1}^d(x-e^{2\pi i\beta_j})$ are polynomials with coefficients in $\bbbz$. In other words, they
are products of cyclotomic polynomials. Let us assume we are in this case. Then we can find natural numbers
$p_1,\ldots,p_r$ and $q_1,\ldots,q_s$ such that
$$\prod_{j=1}^d{x-e^{2\pi i\alpha_j}\over x-e^{2\pi i\beta_j}}={\prod_{j=1}^r x^{p_j}-1\over
\prod_{j=1}^s x^{q_j}-1}\;.$$
Note that $p_1+\cdots+p_r=q_1+\cdots+q_s$. It is a small exercise to show that
$${(\alpha_1)_n\cdots(\alpha_d)_n\over(\beta_1)_n\cdots(\beta_d)_n}
=M^{-n}{(p_1n)!\cdots(p_rn)!\over(q_1n)!\cdots(q_sn)!},
\quad M={p_1^{p_1}\cdots p_r^{p_r}\over q_1^{q_1}\cdots q_s^{q_s}}\;.$$

It turns out that when the hypergeometric parameters are defined over $\bbbq$, it is possible to
extend the definition of $H_q$ to all prime powers $q$ which are relatively prime to the common
denominator of the $\alpha_i,\beta_j$.
Let $D(X)$ be the greatest common divisor of
the polynomials $\prod_{i=1}^r(X^{p_i}-1)$ and $\prod_{j=1}^s(X^{q_j}-1)$.
We rewrite our Katz sum in a different shape.

\begin{theorem}\label{rewriteZ}
With the above notation we have
$$H_q(\boldalpha,\boldbeta|t)={(-1)^{r+s}\over 1-q}
\sum_{m=0}^{q-2}q^{-s(0)+s(m)}g(\v pm,-\v qm)
\ \omega(\epsilon M^{-1}t)^m\;,$$
where
$$g(\v pm,-\v qm)=g(p_1m)\cdots g(p_rm)g(-q_1m)\cdots g(-q_sm),
\quad M=\prod_{j=1}^rp_j^{p_j}\prod_{j=1}^s q_j^{-q_j}$$
and $\epsilon=(-1)^{\sum_iq_i}$
and $s(m)$ is the multiplicity of the zero $e^{2\pi im/\q}$ in $D(X)$.
\end{theorem}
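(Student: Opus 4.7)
The plan is to reduce the product $\prod_{j=1}^d g(m+\alpha_j\q)g(-m-\beta_j\q)$ to the product $g(\v pm,-\v qm)$ indexed by the integer exponents $p_i,q_j$ via repeated application of the Hasse--Davenport product relation
\begin{equation}\label{HDplan}
\prod_{k=0}^{N-1}g(\omega^{m+k\q/N})=\omega(N)^{-Nm}\,g(Nm)\prod_{k=1}^{N-1}g(\omega^{k\q/N}),\qquad N\mid\q.
\end{equation}
Setting $A(X)=\prod_i(X^{p_i}-1)$, the defining factorisation $A(X)=D(X)\prod_{j=1}^d(X-e^{2\pi i\alpha_j})$ together with the hypothesis $(q-1)\alpha_j\in\bbbz$ (which forces $p_i\mid\q$, since every $e^{2\pi i\alpha_j}$ is a $p_i$-th root of unity for some $i$) produces the Gauss-sum identity
$$\prod_{j=1}^d g(m+\alpha_j\q)\cdot\prod_{m'}g(m+m')^{s(m')}=\prod_{i=1}^r\prod_{k=0}^{p_i-1}g(\omega^{m+k\q/p_i}),$$
obtained by grouping over roots of $A(X)$ with multiplicity. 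An analogous identity holds on the $\beta$-side, with $-m-\beta_j\q$ and $-m-k\q/q_j$ in place of $m+\alpha_j\q$ and $m+k\q/p_i$ (using that $B(X)/D(X)\in\bbbz[X]$ is self-conjugate, so the multiset $\{-\beta_j\}$ coincides with $\{\beta_j\}$ modulo $\bbbz$).

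Applying \fref{HDplan} to each inner product, multiplying the two identities, and dividing by the value at $m=0$ that defines the normalisation in Definition \tref{definitionHq}, I expect the Davenport--Hasse constants $C_N=\prod_{k=1}^{N-1}g(\omega^{k\q/N})$ to cancel, the factor $\prod_{i,j}g(0)=(-1)^{r+s}$ to emerge in the denominator, and the $\omega$-prefactors from the two sides to combine to $\omega(M)^{-m}$ using $\omega(\prod p_i^{p_i})^{-m}\omega(\prod q_j^{q_j})^{+m}=\omega(M)^{-m}$. The remaining contribution of $D(X)$ reduces to the ratio
$$R_m=\frac{\prod_{m'}(g(m')g(-m'))^{s(m')}}{\prod_{m'}(g(m+m')g(-m-m'))^{s(m')}}.$$
Evaluating $R_m$ via $g(a)g(-a)=\omega^a(-1)q$ for $a\not\equiv 0\pmod\q$ and $g(0)^2=1$, and using the symmetry $s(-m')=s(m')$ that comes from $D\in\bbbz[X]$, the $\omega(-1)^\bullet$ contributions should telescope to yield
$$R_m=q^{s(m)-s(0)}\,\omega((-1)^{\deg D})^m.$$

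Assembling everything, the full character factor becomes $\omega\bigl(M^{-1}(-1)^{\deg D}(-1)^dt\bigr)^m$, and the identity $\sum_jq_j=d+\deg D$ (from $\deg A=\deg B$) gives $(-1)^{\deg D+d}=\epsilon$, so this collapses to $\omega(\epsilon M^{-1}t)^m$, as claimed. The main obstacle is the careful bookkeeping in the evaluation of $R_m$: one must single out the exceptional index $m'\equiv-m\pmod\q$ where $g(m+m')g(-m-m')$ degenerates from $\omega^{m+m'}(-1)q$ to $1$ (this is the source of the clean $q^{s(m)}$ factor in the final formula), and then verify that the residual $\omega(-1)^{\bullet}$ exponents really telescope to $-m\deg D$ rather than to something more delicate depending on the individual cyclotomic factors of $D$.
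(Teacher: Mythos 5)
Your proposal is correct and follows essentially the same route as the paper: expand $\prod_j g(m+\alpha_j\q)$ and $\prod_j g(-m-\beta_j\q)$ as products over the roots of $\prod_i(X^{p_i}-1)$ and $\prod_j(X^{q_j}-1)$, cancel the $D(X)$-contribution, apply Hasse--Davenport to each full root system, and evaluate the residual $D$-ratio via $g(n)g(-n)=\omega(-1)^nq$. The paper carries out precisely the bookkeeping you flag as the remaining work (including the $\omega(-1)^{\delta m}$ telescoping and the identity $d+\deg D=\sum_jq_j$); your remark about self-conjugacy of $B/D$ is unnecessary, since the $\beta$-side identity holds directly from the multiset decomposition of the roots of $B(X)$, but it does no harm.
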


This theorem is proven in Section \ref{overQ}.

\begin{assumption}
From now on, when we work over $\bbbq$,
we adopt the right-hand side of Theorem \ref{rewriteZ} as definition of $H_q$.
\end{assumption}

Let $\lambda\in\fqstar$ and let $V_{\lambda}$ be the affine variety defined by the projective equations
\begin{equation}\label{Vequation}
x_1+x_2+\cdots+x_r-y_1-\cdots-y_s=0,\qquad \lambda x_1^{p_1}\cdots x_r^{p_r}=y_1^{q_1}\cdots y_s^{q_s}
\end{equation}
and $x_j,y_j\ne0$. The main theorem of this paper reads as follows.

\begin{theorem}\label{main}
Let the notation for $p_i,q_j,M$ be as above. Suppose that the greatest common divisor of
$p_1,\ldots,p_r,q_1,\ldots,q_s$ is one and suppose that $M\lambda\ne 1$. Then there
exists a suitable non-singular completion of $V_{\lambda}$, denoted by $\overline{V_{\lambda}}$, such that
$$|\overline{V_{\lambda}}(\bbbf_q)|=P_{rs}(q)+(-1)^{r+s-1}q^{\min(r-1,s-1)}H_q(\boldalpha,\boldbeta|M\lambda)\;,$$
where
$$P_{rs}(q)=\sum_{m=0}^{\min(r-1,s-1)}{r-1\choose m}{s-1\choose m}{q^{r+s-m-2}-q^m\over q-1}$$
and $\overline{V_{\lambda}}(\bbbf_q)$ is the set of $\bbbf_q$-rational points on $\overline{V_{\lambda}}$.
\end{theorem}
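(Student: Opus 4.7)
The plan is to compute the number of points on the open stratum $V_\lambda(\bbbf_q)$ (where all $x_i, y_j$ lie in $\fqstar$) by character-sum orthogonality, match the result against the right-hand side of Theorem \ref{rewriteZ}, and then reconstruct $|\overline{V_\lambda}(\bbbf_q)|$ by adding the contributions from boundary strata, which should account for the polynomial $P_{rs}(q)$.

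First I would insert two indicators: for the linear relation use $q^{-1}\sum_{a \in \bbbf_q}\psi_q(a(\sum_i x_i-\sum_j y_j))$, and for the monomial relation use $(q-1)^{-1}\sum_{m=0}^{q-2}\omega^m(\lambda\prod x_i^{p_i}/\prod y_j^{q_j})$. Interchanging sums, the $r+s$ coordinate sums decouple; for $a\neq 0$ each factor $\sum_{x\in\fqstar}\psi_q(ax)\omega^{p_i m}(x)$ equals $\omega^{-p_i m}(a)g(p_im)$, and analogously for the $y_j$ with character $\omega^{-q_jm}$. Because $\sum p_i=\sum q_j$ the powers of $\omega(a)$ cancel, so the sum over $a\in\fqstar$ contributes only a factor $q-1$, leaving $\prod_i g(p_im)\prod_j g(-q_jm)\,\omega^m(\cdot)$ summed over $m$. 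To match Theorem \ref{rewriteZ} exactly one must track two kinds of degeneracy: whenever $p_im\equiv 0\mod{q-1}$ or $-q_jm\equiv 0\mod{q-1}$ the Gauss sum collapses to $-1$, and the number of such indices is precisely the multiplicity $s(m)$ of $e^{2\pi im/\q}$ in $D(X)$, producing the factor $q^{-s(0)+s(m)}$ after normalizing by the $m=0$ term; separately, the $a=0$ and $m=0$ contributions are isolated and reinterpreted as boundary terms. The sign $(-1)^{r+s}$ accumulates from the $-1$'s, while $\epsilon=(-1)^{\sum q_j}$ and the constant $M^{-1}$ in $\omega^m$ emerge after reabsorbing the $\omega^{-p_im}(\cdot)$ factors via a change of variable. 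This identifies the open-stratum count with $(-1)^{r+s-1}q^{\min(r-1,s-1)}H_q(\boldalpha,\boldbeta|M\lambda)$ modulo polynomial-in-$q$ boundary terms, the $q^{\min(r-1,s-1)}$ factor arising from the projectivization.

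Second I would construct $\overline{V_\lambda}$ as a smooth compactification adapted to the toric structure of $\bbbp^{r-1}\times\bbbp^{s-1}$, in which the monomial equation defines a hypersurface of bidegree $(\sum p_i,\sum q_j)$ and the linear equation a section. The coprimality hypothesis on $(p_1,\ldots,p_r,q_1,\ldots,q_s)$ guarantees that the monomial fibration is irreducible, and $M\lambda\neq 1$ is the transversality condition ensuring that $\overline{V_\lambda}$ meets each toric boundary stratum smoothly. The boundary then stratifies according to which subsets of the $x$- and $y$-coordinates vanish; a stratum with $m$ vanishing $x$'s and $m$ vanishing $y$'s contributes $q^m$ times the number of points of a projective space of dimension $r+s-2m-3$, weighted by $\binom{r-1}{m}\binom{s-1}{m}$, which matches each summand of $P_{rs}(q)$.

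The principal obstacle is this last geometric step: constructing $\overline{V_\lambda}$ explicitly, proving smoothness uniformly under $M\lambda\neq 1$, and verifying that the boundary contributes exactly $P_{rs}(q)$ with no residual correction. The character-sum computation on the open stratum, though intricate, is essentially algorithmic once orthogonality is applied; the compactification and matching of the boundary to $P_{rs}(q)$ is the delicate part, and may well require an induction on $r+s$ or a careful toric/blow-up construction.
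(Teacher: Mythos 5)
Your first step (the open-stratum count via character orthogonality) is essentially the paper's Proposition \ref{affinepointcount}, and it is fine. The genuine gap is in how you allocate the remaining terms. You claim the open-stratum count already agrees with $(-1)^{r+s-1}q^{\min(r-1,s-1)}H_q(\boldalpha,\boldbeta|M\lambda)$ ``modulo polynomial-in-$q$ boundary terms'', and that the boundary therefore only has to supply $P_{rs}(q)$. That is false whenever $s(m)>0$ for some $m\not\equiv 0$, i.e.\ whenever some $p_i$ and $q_j$ share a common factor (e.g.\ $p=(6,3,2,1)$, $q=(8,4,4)$ as in the paper's worked example). The affine count produces $\frac{1}{q(q-1)}\sum_m g(\v pm,-\v qm)\,\omega(\epsilon\lambda)^m$ with no $q$-powers, while $H_q$ over $\bbbq$ (Theorem \ref{rewriteZ}) carries the factor $q^{s(m)}$ in the $m$-th term; the discrepancy is a genuine character sum in $\lambda$, not a polynomial in $q$. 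In the paper these missing $q^{s(m)}$'s come precisely from the boundary: each added component $W_{C,\lambda}$ is a torus times a variety of the same hypergeometric type, whose count (Proposition \ref{componentcount}) contains Gauss-sum terms supported on the $m$ with $a_{S(C)}m\equiv 0\ (\mathrm{mod}\ q-1)$, and summing the combinatorial coefficients over cells gives $q^{\min(|I(m)|,|J(m)|)}=q^{s(m)}$ (Proposition \ref{summationterm}), while the purely polynomial parts assemble into $P_{rs}(q)$ via Propositions \ref{summationmain} and \ref{summationmaximal}. Your picture of boundary strata as projective spaces contributing only $\binom{r-1}{m}\binom{s-1}{m}q^m$-type polynomial counts misses these character-sum contributions entirely, so the bookkeeping cannot close.

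There are also problems with the proposed compactification itself. The equations do not live on $\bbbp^{r-1}\times\bbbp^{s-1}$: the linear form $\sum_i x_i-\sum_j y_j$ is not bihomogeneous, and the monomial equation is only well defined on $\bbbp^{r+s-1}$ (using $\sum p_i=\sum q_j$). Moreover the naive closure in $\bbbp^{r+s-1}$ is in general badly singular along the coordinate hyperplanes; the paper has to pass to a partial toric blow-up determined by the staircase triangulation of the cone over $\Delta^{r-1}\times\Delta^{s-1}$, define $\overline{V_\lambda}$ inside that, and check smoothness of the added components there (this is where the hypothesis $\gcd(p_1,\ldots,q_s)=1$ enters, via $\gcd(a,a_S)=1$ in the component count; $M\lambda\neq 1$ is what makes the open part $V_\lambda$ itself non-singular, not a transversality condition at the boundary). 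So the ``delicate part'' you flag is not just delicate: as planned, both the choice of ambient space and the expected shape of the boundary contribution are wrong, and the argument would fail already in examples where the $p_i,q_j$ are not pairwise coprime.
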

What we mean by a suitable non-singular completion of $\overline{V_{\lambda}}$ is elaborated in
Section \ref{completion}, more precisely Definition \ref{completiondefinition}. Furthermore,
one easily checks that the condition $M\lambda\ne0,1$ corresponds to the non-singularity
of $V_{\lambda}$.

Here we give a few examples to illustrate the theorem.
\begin{corollary}Let $f(x)=x^3+3x^2-4t$ with $t\in\bbbf_q$ and $t\ne0,1$. Let $N_f(t)$ be the
number of zeros of $f(x)$ in $\bbbf_q$. Suppose that $q$ is not divisible by $2$ or $3$. Then
$$N_f(t)=1+H_q(1/3,2/3;1,1/2|t)\;.$$
\end{corollary}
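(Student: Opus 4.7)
The plan is to specialize Theorem \ref{main} to the cyclotomic data corresponding to $\boldalpha = (1/3, 2/3)$, $\boldbeta = (1, 1/2)$, and to show that the resulting variety $V_\lambda$ has the same $\bbbf_q$-point count as the zero locus of $f$. The cyclotomic quotient is
$$\frac{\prod_j (x - e^{2\pi i \alpha_j})}{\prod_j (x - e^{2\pi i \beta_j})} = \frac{x^2+x+1}{x^2-1} = \frac{x^3 - 1}{(x-1)(x^2-1)},$$
giving $r = 1$, $s = 2$, $p_1 = 3$, $q_1 = 1$, $q_2 = 2$, and hence $M = 27/4$. I then set $\lambda = 4t/27$, so that $M\lambda = t$; the hypotheses $\gcd(p_1, q_1, q_2) = 1$ and $M\lambda \ne 1$ both follow from $t \ne 0, 1$ together with $\gcd(q, 6) = 1$.

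Next I read off the numerical ingredients of Theorem \ref{main}. Since $r + s - 1 = 2$ and $\min(r-1, s-1) = 0$, the prefactor $(-1)^{r+s-1} q^{\min(r-1, s-1)}$ equals $1$, and the sum for $P_{rs}(q)$ collapses to $P_{1,2}(q) = (q-1)/(q-1) = 1$. The theorem therefore reads $|\overline{V_\lambda}(\bbbf_q)| = 1 + H_q(1/3, 2/3; 1, 1/2 | t)$, and the corollary reduces to showing $|\overline{V_\lambda}(\bbbf_q)| = N_f(t)$.

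To prove this identification, I use the linear relation $x_1 = y_1 + y_2$ to project $V_\lambda$ onto $\bbbp^1$, where it becomes the subscheme cut out by $\lambda(y_1 + y_2)^3 = y_1 y_2^2$ with $y_1, y_2, y_1 + y_2 \ne 0$. In the chart $y_2 = 1$ with $w = y_1$, the equation is $\lambda(w + 1)^3 = w$. The rational substitution $x = -3/(w+1)$, with inverse $w = -(x+3)/x$, transforms this into $x^3 + 3x^2 = 27\lambda = 4t$, i.e., $f(x) = 0$. The forbidden values $w = 0, -1, \infty$ correspond under this substitution to $x = -3, \infty, 0$; since $f(0) = f(-3) = -4t \ne 0$, no $\bbbf_q$-root of $f$ is ruled out, and the substitution yields a bijection between $V_\lambda(\bbbf_q)$ and $\{x \in \bbbf_q : f(x) = 0\}$.

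Finally I verify that passing from $V_\lambda$ to its non-singular completion does not change the count. The variety $V_\lambda$ is the intersection of a line and a cubic in $\bbbp^2$, hence $0$-dimensional of degree $3$, and is reduced (so automatically non-singular) precisely when $M\lambda \ne 0, 1$, the critical fibre of $w \mapsto w/(w+1)^3$ being $\lambda = 4/27$. The main obstacle is to confirm from Definition \ref{completiondefinition} that in this $0$-dimensional reduced situation $\overline{V_\lambda}$ coincides with $V_\lambda$ and contributes no extra $\bbbf_q$-points — intuitively clear because there is nothing to compactify or desingularize, but requiring an unpacking of the completion construction. Granting this last point, one has $|\overline{V_\lambda}(\bbbf_q)| = N_f(t)$, and the corollary follows.
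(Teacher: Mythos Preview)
Your proof is correct and follows essentially the same route as the paper: the same cyclotomic identification $p_1=3,\ q_1=1,\ q_2=2,\ M=27/4$, the same elimination of the linear relation and dehomogenization on $\bbbp^1$, and the same monomial substitution $x_1=-3/x$ (your $w+1=-3/x$ is exactly this). You are simply more explicit than the paper about the constant $P_{1,2}(q)=1$, the check that the forbidden coordinate loci correspond to non-roots of $f$, and the fact that in dimension zero the completion adds nothing; the paper leaves all of this implicit.
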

\begin{proof}{} We take $\alpha_1=1/3,\alpha_2=2/3,\beta_1=1,\beta_2=1/2$ and note that
$${(x-e^{2\pi i/3})(x-e^{4\pi i/3})\over(x-1)(x+1)}={x^3-1\over(x-1)(x^2-1)}\;.$$
So $p_1=3,\ q_1=1,\ q_2=2$, and $M=27/4$. The variety $V_{\lambda}$ is given by the equations
$x-y_1-y_2=0,\lambda x^3=y_1y_2^2$. Eliminate $y_1$ and set $y_2=1$ (dehomogenization) to get
$x-\lambda x^3-1=0$. Replace $x$ by $-3/x$ to get $x^3+3x^2-27\lambda=0$. Application of
Theorem \ref{main} and the relation $t=27\lambda/4$ gives the desired result.
\end{proof}

\begin{corollary}Consider the elliptic curve $y^2+xy+y=\lambda x^3$ with $\lambda\in\bbbf_q$.
Denote its completion with the point at infinity by $E_{\lambda}$. Suppose that $q$ is not divisible
by $2$ or $3$. Then
$$|E_{\lambda}(\bbbf_q)|=q+1-H_q(1/3,2/3;1,1|27\lambda)\;.$$
\end{corollary}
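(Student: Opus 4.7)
The plan is to apply Theorem \ref{main} with $\boldalpha=(1/3,2/3)$ and $\boldbeta=(1,1)$, in close parallel to the previous corollary. Step one is to read off $r,s,p_i,q_j,M$ from
$$\frac{(x-e^{2\pi i/3})(x-e^{4\pi i/3})}{(x-1)^2}=\frac{x^2+x+1}{(x-1)^2}=\frac{x^3-1}{(x-1)^3}\;,$$
which gives $r=1$, $p_1=3$, $s=3$, $q_1=q_2=q_3=1$, and $M=27$. The hypotheses of Theorem \ref{main} reduce to $\gcd(p_1,q_1,q_2,q_3)=1$ and $27\lambda\ne 1$, and the latter is exactly the non-singularity condition for the given Weierstrass model.

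Step two is to substitute into \fref{Vequation}. The variety $V_\lambda$ is cut out by $x_1=y_1+y_2+y_3$ and $\lambda x_1^3=y_1y_2y_3$; eliminating $x_1$ leaves the plane projective cubic
$$C_\lambda:\ \lambda(y_1+y_2+y_3)^3=y_1y_2y_3\subset\mathbb{P}^2$$
with the three coordinate-vanishing points deleted. A direct Jacobian calculation shows that $C_\lambda$ is smooth whenever $\lambda\ne 0$ and $27\lambda\ne 1$, so we may take $\overline{V_\lambda}=C_\lambda$.

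Step three is to exhibit an explicit $\bbbf_q$-isomorphism $C_\lambda\to E_\lambda$. In the chart $y_3=1$, set $x=-(y_1+y_2+1)$ and $y=y_1$, so that $y_2=-x-y-1$; a direct substitution turns $\lambda(y_1+y_2+1)^3=y_1y_2$ into $y^2+xy+y=\lambda x^3$. Since both curves are smooth and projective, this birational change of coordinates automatically extends to an isomorphism over $\bbbf_q$, so $|\overline{V_\lambda}(\bbbf_q)|=|E_\lambda(\bbbf_q)|$. One can check for bookkeeping that the three missing points of $C_\lambda\setminus V_\lambda$ land on $(0,-1)$, $(0,0)$, and the point at infinity of $E_\lambda$ respectively.

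Step four is to plug into Theorem \ref{main}. With $\min(r-1,s-1)=0$ the polynomial $P_{rs}(q)$ collapses to its single $m=0$ term $\binom{0}{0}\binom{2}{0}(q^2-1)/(q-1)=q+1$, and the prefactor in front of $H_q$ is $(-1)^{r+s-1}q^0=-1$, yielding the claimed formula. The main obstacle is conceptual rather than computational: one needs to verify that the smooth projective plane cubic $C_\lambda$ really agrees with the canonical completion constructed in Section \ref{completion} (Definition \ref{completiondefinition}); once that identification is in place, the remaining steps are routine.
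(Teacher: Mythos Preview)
Your proof is correct and follows essentially the same approach as the paper: identify $p_1=3$, $q_1=q_2=q_3=1$, $M=27$, write down $V_\lambda$, produce a birational map to the given Weierstrass curve, and read off $P_{rs}(q)=q+1$ and the sign from Theorem~\ref{main}. The only cosmetic difference is that you eliminate $x_1$ to land on the plane cubic $C_\lambda$ and then use an affine-linear change of coordinates in the chart $y_3=1$, whereas the paper eliminates $y_3$, sets $x_1=1$, and uses the monomial substitution $y_1=-y/x,\ y_2=-1/x$; your explicit remark that smoothness forces $\overline{V_\lambda}\cong C_\lambda\cong E_\lambda$ is a point the paper leaves implicit.
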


\begin{proof}{}
We take $\alpha_1=1/3,\alpha_2=2/3,\beta_1=1,\beta_2=1$ and note that
$${(x-e^{2\pi i/3})(x-e^{4\pi i/3})\over(x-1)^3)}={(x^3-1)\over(x-1)^3}\;.$$
So $p_1=3,\ q_1=q_2=q_3=1$, and $M=27$. The variety $V_{\lambda}$ is given by the equations
$$x_1-y_1-y_2-y_3=0,\lambda x_1^3=y_1y_2y_3\;.$$
Eliminate $y_3$ and set $x_1=1$ to dehomogenize. We get
$$1-y_1-y_2-\lambda y_1^{-1}y_2^{-1}=0\;.$$
Introduce new coordinates $x,y$ via
$$y_1=-y/x,\qquad y_2=-1/x\;.$$
Note that this is a birational map. In the new coordinates get the curve
$$y^2+xy+y=\lambda x^3\;.$$
Theorem \ref{main} tells us that the number of $\bbbf_q$-rational points (including $\infty$) equals
$$q+1-H_q(1/3,2/3;1,1|27\lambda)\;.$$
\end{proof}

\begin{corollary} Consider the rational elliptic surface $S_{\lambda}$
given by the affine equation
$$y^2-xyz+x^3+z^5-\lambda^{-1}=0$$
with $\lambda\in\fqstar$. Suppose that $q$ is not divisible by $2,3$ or $5$ and
$2^{14}3^95^5\lambda\ne 1$. Let $\overline{S_{\lambda}}$ be a suitable completion of $S_{\lambda}$.
Then
$$|\overline{S_{\lambda}}(\bbbf_q)|=q^2+3q+1+qH_q(\boldalpha,\boldbeta|2^{14}3^95^5\lambda)\;,$$
where
$$\boldalpha=(1/30,7/30,11/30,13/30,17/30,19/30,23/30,29/30)$$ and
$$\boldbeta=(1/5,1/3,2/5,1/2,3/5,2/3,4/5,1)\;.$$
Moreover, $H_q(\boldalpha,\boldbeta|t)$ is integer valued.
\end{corollary}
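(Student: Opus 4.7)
The plan is to apply Theorem~\ref{main} directly, once I match the cyclotomic data to the $(p_i,q_j)$ form of Theorem~\ref{rewriteZ} and exhibit a birational model of $S_\lambda$ as a $V_\lambda$. The eight entries of $\boldalpha$ are exactly the primitive $30$-th roots of unity modulo $\bbbz$, so $\prod_j(x-e^{2\pi i\alpha_j})=\Phi_{30}(x)$; the entries of $\boldbeta$ sweep out the primitive $1$st, $2$nd, $3$rd, and $5$th roots, so $\prod_j(x-e^{2\pi i\beta_j})=\Phi_1\Phi_2\Phi_3\Phi_5$. A short cyclotomic manipulation reduces the quotient to
\[
\frac{(x^{30}-1)(x-1)}{(x^{15}-1)(x^{10}-1)(x^6-1)},
\]
so $r=2$, $s=3$, $(p_1,p_2)=(30,1)$, $(q_1,q_2,q_3)=(15,10,6)$. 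A direct computation then gives $M=30^{30}/(15^{15}\cdot 10^{10}\cdot 6^6)=2^{14}3^9 5^5$ and $P_{2,3}(q)=(q^3-1)/(q-1)+2(q^2-q)/(q-1)=q^2+3q+1$, exactly as required.

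For the geometric step, dehomogenise $V_\lambda$ by setting $x_2=1$, so that $V_\lambda=\{x_1+1=y_1+y_2+y_3,\ \lambda x_1^{30}=y_1^{15}y_2^{10}y_3^6\}$. I claim the map
\[
\phi:(x,y,z)\longmapsto(x_1,y_1,y_2,y_3)=(\lambda xyz,\ \lambda y^2,\ \lambda x^3,\ \lambda z^5)
\]
identifies $S_\lambda$ birationally with $V_\lambda$. Scaling $y^2-xyz+x^3+z^5=1/\lambda$ by $\lambda$ yields $\lambda y^2+\lambda x^3+\lambda z^5-\lambda xyz=1$, which is the first equation of $V_\lambda$; the second follows from $y_1^{15}y_2^{10}y_3^6=\lambda^{31}(xyz)^{30}=\lambda x_1^{30}$. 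A rational inverse is
\[
x=\frac{y_1^{10}y_2^7 y_3^4}{\lambda x_1^{20}},\quad y=\frac{x_1^{15}}{y_1^7y_2^5y_3^3},\quad z=\frac{x_1^6}{y_1^3 y_2^2 y_3},
\]
whose correctness on $V_\lambda$ reduces (via the identity $xyz=x_1/\lambda$) to the two defining equations, so $\phi$ is birational.

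Theorem~\ref{main} now applies: $\gcd(30,1,15,10,6)=1$, $M\lambda\neq 1$ by hypothesis, and with $r+s-1=4$ and $\min(r-1,s-1)=1$ the sign is $+1$ and the $q$-power is $q$, giving $|\overline{V_\lambda}(\bbbf_q)|=q^2+3q+1+q\,H_q(\boldalpha,\boldbeta|2^{14}3^9 5^5\lambda)$. Two birational smooth projective surfaces have point counts differing by an integer multiple of $q$ (each blow-up of a rational point contributes $q$), so by choosing the suitable completion $\overline{S_\lambda}$ of $S_\lambda$ to match $\overline{V_\lambda}$ in the sense of Definition~\ref{completiondefinition}, the stated identity follows. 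For integer-valuedness the identity already forces $qH_q\in\bbbz$, and Theorem~\ref{rewriteZ} exhibits $H_q$ as a $\bbbq$-valued sum that is $\ell$-adically integral for every $\ell\neq p$; the remaining $p$-integrality can be extracted from a Gross--Koblitz $p$-adic analysis of the Gauss sums in Theorem~\ref{rewriteZ}, or, equivalently, from a rational-surface argument forcing $|\overline{S_\lambda}(\bbbf_q)|\equiv 1\pmod{q}$. The principal obstacle I foresee is this last matching step: pinning down a smooth completion of $S_\lambda$ that realises Definition~\ref{completiondefinition}'s prescription for $\overline{V_\lambda}$ exactly, without introducing stray exceptional divisors from resolving the singularities at the boundary.
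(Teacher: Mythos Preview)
Your approach is essentially the paper's: identify the cyclotomic quotient as $(x^{30}-1)(x-1)/\bigl((x^{15}-1)(x^{10}-1)(x^6-1)\bigr)$, compute $M$ and $P_{2,3}$, exhibit $S_\lambda$ as birational to $V_\lambda$, and apply Theorem~\ref{main}. The paper's birational map differs only in detail: it sets $x_1=1$, eliminates $x_2$, and then uses the monomial torus automorphism $y_1=y/(xz),\ y_2=x^2/(yz),\ y_3=z^4/(xy)$ (exponent matrix of determinant $\pm1$), whereas you dehomogenise at $x_2=1$ and write down an explicit polynomial map with rational inverse. Both are valid.

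Two small points. First, your worry about ``matching completions'' is unnecessary: the statement lets you \emph{choose} $\overline{S_\lambda}$, so one simply takes $\overline{S_\lambda}:=\overline{V_\lambda}$; the birational map identifies the dense open tori, so $\overline{V_\lambda}$ is already a completion of a dense open in $S_\lambda$, and nothing more is required. Second, for integrality the paper just invokes Theorem~\ref{denominatorHq}. Your Gross--Koblitz suggestion is morally the same Stickelberger computation, but in either case one must still verify that the Landau minimum equals $\min(r,s)=2$ here; this reduces to Chebyshev's classical inequality $\lfloor 30x\rfloor+\lfloor x\rfloor\ge\lfloor 15x\rfloor+\lfloor 10x\rfloor+\lfloor 6x\rfloor$ for all real $x$.
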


From \cite{beukersheckman} it follows that the analytic function $_8F_7(\boldalpha,\boldbeta|z)$
is an algebraic function. Fernando Rodriguez-Villegas computed its degree over $\bbbc(z)$
\cite{frv}, which is $483840$. He also noted that
$$_8F_7(\boldalpha,\boldbeta| 2^{14}3^95^5t)=\sum_{n\ge0}{(30n)!n!\over(15n)!(10n)!(6n)!}\ t^n\;.$$
The coefficients turn out to be integers and they were essentially used by Chebyshev in his proof
for his estimates of the prime counting function $\pi(x)$.

Another remark is that $y^2-xyz+x^3+z^5-\lambda^{-1}=0$ is a rational elliptic surface for any given
$\lambda$. The $\zeta$-function of such surfaces has been computed extensively by Shioda
in\cite{shioda}. There it turns out that the global $\zeta$-function of such a surface has the form
$\zeta(s)\zeta(s-1)^2\zeta(s-2)L(\rho,s-1)$, where $\zeta(s)$ is the Riemann $\zeta$ function and
$L(\rho,s)$ is the Artin $L$-series corresponding to a finite representation of Gal($\overline{\bbbq}/\bbbq)$)
of dimension $8$.
\medskip

\begin{proof}{}We
take the elements of $\boldalpha$ for the $\alpha_i$ and the elements of $\boldbeta$ for the
$\beta_j$. We verify that
$$\prod_{j=1}^8 {x-e^{2\pi \alpha_j}\over x-e^{2\pi i\beta_j}}=
{(x^{30}-1)(x-1)\over(x^{15}-1)(x^{10}-1)(x^6-1)}\;.$$
Compare the exponents with the remarks above. We have $M=2^{14}3^95^5$. Theorem \ref{main} implies that
$$q^2+3q+1+qH_q(\boldalpha,\boldbeta|M\lambda)$$
equals the number of $\bbbf_q$-rational points on a suitable completion of
$$x_1+x_2-y_1-y_2-y_3=0,\qquad \lambda x_1^{30}x_2=y_1^{15}y_2^{10}y_3^6\;.$$
Eliminate $x_2$ and set $x_1=1$. We obtain
$$1+\lambda^{-1}y_1^{15}y_2^{10}y_3^6-y_1-y_2-y_3=0\;.$$
Substitute $y_1=x^{-1}yz^{-1},y_2=x^2y^{-1}z^{-1},y_3=x^{-1}y^{-1}z^4$. Note that
this monomial substitution is reversible because the matrix of exponents has determinant
$-1$. We obtain
$$-xyz-\lambda^{-1}+y^2+x^3+z^5=0\;.$$
The integrality of the values of $H_q$ follows from Theorem \ref{denominatorHq}.
\end{proof}
\medskip

Surprisingly enough Theorem \ref{main} does not immediately imply Ono's Theorem \ref{onotheorem}.
In this case we get the parameters $p_1=p_2=2,q_1=q_2=q^3=q^4=1$ and the threefold
$$x_1+x_2-y_1-y_2-y_3-y_4=0,\qquad \lambda x_1^2x_2^2=y_1y_2y_3y_4\;,$$
instead of the expected Legendre family of elliptic curves. However in this case we
have the relations $p_1=q_1+q_2,p_2=q_3+q_4$ besides the overall relation $p_1+p_2=q_1+\cdots+q_4$.
In such a case we can construct another variety whose point count also yields the desired hypergeometric
sum. We indicate how this works in Theorem \ref{legendrecount} in the last section of this paper.
This theorem does yield Legendre's family for our example.

{\bf Acknowledgements:} We are greatly indebted to Fernando Rodriguez-Villegas who inspired the subject,
and who co-organized the AIM/ICTP meeting on 'Hypergeometric motives' at Trieste in
July 2012. During this stimulating meeting the idea of reverse engineering hypergeometric
motives on an industrial scale arose. We thank both AIM and ICTP for
their hospitality.
We also like to thank the Mathematisches Forschungsinstitut Oberwolfach
for providing the wonderful atmosphere during the workshop 'Explicit methods in number theory'
in July 2013. Here the final form of the present paper was conceived. 
Finally we thank the anonymous referee for some pertinent questions which led to a
substantial improvement of the manuscript.

\section{Gauss sums, basic properties}

We use the notation given in the introduction and recall some basic theorems.
The reference that we use is the second author's books on Number
Theory, \cite{cohen1}, \cite{cohen2} (where the notation $\tau(\chi,\psi)$ is
used instead of $g(\chi)$ though).
\begin{theorem}\label{cancelgauss}We have
\begin{enumerate}
\item $g(0)=-1$
\item $g(m)g(-m)=\omega(-1)^mq$ if $m\not\is0\mod{\q}$.
\end{enumerate}
\end{theorem}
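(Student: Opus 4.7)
The plan is to prove both statements by direct computation from the definition $g(m)=\sum_{x\in\fqstar}\omega^m(x)\psi_q(x)$, combined with two standard facts: (i) a non-trivial additive character sums to zero over $\bbbf_q$, and (ii) a non-trivial multiplicative character sums to zero over $\fqstar$.

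For part (1), the character $\omega^0$ is trivial on $\fqstar$, so $g(0)=\sum_{x\in\fqstar}\psi_q(x)$. Since $\psi_q$ is a non-trivial additive character, $\sum_{x\in\bbbf_q}\psi_q(x)=0$; peeling off the $x=0$ term (which contributes $\psi_q(0)=1$) yields $g(0)=-1$.

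For part (2), I would expand the product
\[g(m)g(-m)=\sum_{x,y\in\fqstar}\omega^m(x)\omega^{-m}(y)\,\psi_q(x+y)\]
and make the substitution $y=zx$ with $z\in\fqstar$, which is a bijection on $\fqstar\times\fqstar$. The inner character becomes $\omega^{-m}(z)\omega^{-m}(x)$, so the characters in $x$ cancel and one obtains
\[g(m)g(-m)=\sum_{z\in\fqstar}\omega^{-m}(z)\sum_{x\in\fqstar}\psi_q(x(1+z)).\]
Then split according to $z=-1$ versus $z\ne-1$: in the former case the inner sum is $q-1$, contributing $\omega^{-m}(-1)(q-1)=\omega(-1)^m(q-1)$ (using $\omega(-1)^{2m}=1$); in the latter case the substitution $x\mapsto x/(1+z)$ turns the inner sum into $\sum_{x\in\fqstar}\psi_q(x)=-1$.

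Collecting the two contributions gives
\[g(m)g(-m)=\omega(-1)^m(q-1)-\sum_{z\ne-1}\omega^{-m}(z).\]
Because $m\not\equiv 0\pmod{\q}$, the character $\omega^{-m}$ is non-trivial, so $\sum_{z\in\fqstar}\omega^{-m}(z)=0$, and hence $\sum_{z\ne-1}\omega^{-m}(z)=-\omega^{-m}(-1)=-\omega(-1)^m$. Substituting yields $g(m)g(-m)=\omega(-1)^m q$, as claimed. The only subtle point is the orthogonality relation used at the last step, which would not be available if $m\equiv 0\pmod{\q}$; this is precisely why that case is excluded and handled separately by part (1), where one instead gets $g(0)^2=1$.
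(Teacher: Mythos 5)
The paper does not prove this theorem: it simply cites Lemma~2.5.8 and Proposition~2.5.9 of Cohen's \emph{Number Theory, Volume~I}, so there is no in-paper argument to compare against. Your proof is correct and is the standard direct computation one finds in textbooks (including, essentially, Cohen's).

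For part~(1), factoring $\omega^0\equiv 1$ and using $\sum_{x\in\bbbf_q}\psi_q(x)=0$ with $\psi_q(0)=1$ is exactly right. For part~(2), the substitution $y=zx$, the split at $z=-1$ versus $z\ne-1$, the geometric-series evaluation $\sum_{x\in\fqstar}\psi_q(x(1+z))=-1$ for $z\ne-1$, and the orthogonality $\sum_{z\in\fqstar}\omega^{-m}(z)=0$ (valid precisely because $\omega^{-m}$ is non-trivial when $m\not\equiv0\pmod{\q}$) combine to give $\omega(-1)^m(q-1)+\omega(-1)^m=\omega(-1)^mq$, using $\omega(-1)=\pm1$ so that $\omega(-1)^{-m}=\omega(-1)^m$. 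Your closing remark about why $m\equiv0\pmod{\q}$ must be excluded is also apt: in that case part~(1) gives $g(0)^2=1\ne q$ (for $q>1$), so the formula genuinely fails there.
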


This is Lemma 2.5.8 and Proposition 2.5.9 of \cite{cohen1}.

\begin{theorem}\label{jacobisum}
Define for any two integers $m,n$ the Jacobi sum
$$J(m,n)={g(m)g(n)\over g(n+m)}\;.$$
Then,
$$J(m,n)=\cases{\sum_{x\in\bbbf_q\setminus\{0,1\}}\omega(1-x)^m\omega(x)^n & \mbox{if $m,n,m+n\not\is0\mod{\q}$}\cr
-1 & \mbox{if $m\is0\mod{\q}$ or $n\is 0\mod{\q} $}\cr
\omega(-1)^mq & \mbox{if $m\is-n\mod{\q}$ and $m\not\is0\mod{\q}$}\cr}$$
\end{theorem}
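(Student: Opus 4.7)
The proof splits naturally into the three cases of the statement, each flowing from the definition $J(m,n)=g(m)g(n)/g(m+n)$ combined with the identities in Theorem \ref{cancelgauss}.

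The two degenerate cases are essentially immediate. If $m\is 0\mod{\q}$ (and symmetrically for $n\is 0$), then $\omega^m$ is trivial and Theorem \ref{cancelgauss}(1) gives $g(m)=-1$; since $m+n\is n$ one also has $g(m+n)=g(n)$, so the quotient is $-1$. If $m\is -n\mod{\q}$ with $m\not\is 0$, then $g(m+n)=g(0)=-1$ while Theorem \ref{cancelgauss}(2) gives $g(m)g(n)=g(m)g(-m)=\omega(-1)^mq$; the quotient is then determined at once.

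The substance of the proof is case 1, where $m$, $n$, and $m+n$ are all nonzero modulo $\q$. I would expand
\begin{equation*}
g(m)g(n)=\sum_{a,b\in\fqstar}\omega^m(a)\omega^n(b)\psi_q(a+b)
\end{equation*}
and substitute $b=at$ with $t\in\fqstar$, which is legitimate because $a\ne 0$ and turns $a+b$ into $a(1+t)$. Factoring the resulting double sum gives
\begin{equation*}
g(m)g(n)=\sum_{t\in\fqstar}\omega^n(t)\sum_{a\in\fqstar}\omega^{m+n}(a)\psi_q(a(1+t))\;.
\end{equation*}
The crucial step is to split on whether $1+t$ vanishes. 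When $t=-1$ the inner sum collapses to $\sum_{a\in\fqstar}\omega^{m+n}(a)$, which is zero by character orthogonality precisely because $m+n\not\is 0$; this is exactly where the hypothesis of case 1 enters in an essential way. When $t\ne -1$ the substitution $a\mapsto a/(1+t)$ is a bijection of $\fqstar$ and transforms the inner sum into $\omega^{-(m+n)}(1+t)\,g(m+n)$. Collecting terms,
\begin{equation*}
g(m)g(n)=g(m+n)\sum_{t\in\fqstar,\,t\ne -1}\omega^n(t)\omega^{-(m+n)}(1+t)\;.
\end{equation*}

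To finish case 1 I would apply the birational change of variable $x=t/(1+t)$, equivalently $t=x/(1-x)$ and $1+t=1/(1-x)$, which gives a bijection from $\fqstar\setminus\{-1\}$ onto $\bbbf_q\setminus\{0,1\}$. Under it the characters telescope,
\begin{equation*}
\omega^n(t)\omega^{-(m+n)}(1+t)=\omega^n(x)\omega^{-n}(1-x)\omega^{m+n}(1-x)=\omega^m(1-x)\omega^n(x)\;,
\end{equation*}
which is exactly the summand in the Jacobi sum of the theorem. The genuinely delicate point in the whole argument is the vanishing of the boundary contribution at $t=-1$; once that is in place the rest reduces to routine character bookkeeping.
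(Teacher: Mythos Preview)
Your argument is the standard direct derivation and is essentially correct; the paper, by contrast, does not prove this theorem at all but simply cites Corollary~2.5.17 of \cite{cohen1}. So you are supplying a self-contained proof where the paper defers to a reference, and your method (expand $g(m)g(n)$, substitute $b=at$, peel off $g(m+n)$, then pass to $x=t/(1+t)$) is exactly the classical one found in most textbooks on Gauss and Jacobi sums.

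One remark on case~3, where you write that ``the quotient is then determined at once.'' If you actually carry out that quotient you get
\[
J(m,n)=\frac{g(m)g(-m)}{g(0)}=\frac{\omega(-1)^m q}{-1}=-\,\omega(-1)^m q\,,
\]
which differs by a sign from the value $\omega(-1)^m q$ printed in the statement. Your reasoning is fine; what this reveals is a sign slip in the theorem as stated in the paper. It is harmless for the only downstream use (Lemma~\ref{integralJ}, which only needs algebraic integrality), but it is worth being explicit about rather than leaving the computation implicit.
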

This follows from Corollary 2.5.17 of \cite{cohen1}, although we slightly
perturbed the definition of $J$.
A simple consequence is the following.

\begin{lemma}\label{integralJ}
For every pair of integers $m,n$ the quotient $g(m)g(n)/g(m+n)$ is an algebraic
integer in $\bbbq(\zeta_{\q})$, where $\zeta_{\q}$ is a primitive $q-1$-st root of
unity.
\end{lemma}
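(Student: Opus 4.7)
The plan is to simply read off the conclusion from Theorem \ref{jacobisum}, handling each of the three cases it lists.

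First I would note that the multiplicative character $\omega:\fqstar\to\bbbc^{\times}$ takes values in the group $\mu_{\q}$ of $\q$-th roots of unity, so $\omega(x)\in\bbbz[\zeta_{\q}]$ for every $x\in\fqstar$, and in particular every value of $\omega$ is an algebraic integer lying in $\bbbq(\zeta_{\q})$. In the generic case of Theorem \ref{jacobisum}, where none of $m$, $n$, $m+n$ is divisible by $\q$, the Jacobi sum
$$J(m,n)=\sum_{x\in\bbbf_q\setminus\{0,1\}}\omega(1-x)^m\omega(x)^n$$
is a finite $\bbbz$-linear combination of products of values of $\omega$, and hence lies in $\bbbz[\zeta_{\q}]$. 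This already gives the claim in this case.

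Next I would dispose of the two degenerate cases. If $m\equiv0$ or $n\equiv0\pmod{\q}$, then Theorem \ref{jacobisum} says $J(m,n)=-1\in\bbbz\subset\bbbz[\zeta_{\q}]$. If $m\equiv-n\pmod{\q}$ with $m\not\equiv0$, then $g(m+n)=g(0)=-1$ by part~(1) of Theorem \ref{cancelgauss}, while $g(m)g(n)=g(m)g(-m)=\omega(-1)^m q$ by part~(2); hence $J(m,n)=\pm\omega(-1)^m q$, again obviously an element of $\bbbz[\zeta_{\q}]$.

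Since these three cases exhaust all pairs $(m,n)\in\bbbz^2$, the quotient $g(m)g(n)/g(m+n)$ is in every case an algebraic integer belonging to $\bbbq(\zeta_{\q})$. There is no real obstacle here; the lemma is essentially a bookkeeping consequence of the previous two theorems, and the only point that could be overlooked is the degenerate case $m+n\equiv0$, where one must explicitly combine $g(0)=-1$ with the Gauss sum evaluation $g(m)g(-m)=\omega(-1)^m q$.
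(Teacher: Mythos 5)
Your proof is correct and matches the paper's intent: the paper labels the lemma ``a simple consequence'' of Theorem~\ref{jacobisum}, and your case-by-case reading of that theorem (noting that each of the three listed values is visibly an algebraic integer in $\bbbq(\zeta_{\q})$) is exactly the argument being gestured at. Your careful handling of the $m+n\equiv0$ case via $g(0)=-1$ and $g(m)g(-m)=\omega(-1)^m q$, with the sign left as $\pm$ to accommodate the paper's ``slightly perturbed'' definition of $J$, is the right level of caution.
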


In Theorems \ref{denominatorHqgeneral} and \ref{denominatorHq}
(in the case over $\bbbq$),
we will compute a bound for the denominator of $H_q$. To do so we need a statement on the prime ideal factorization of Gauss sums. This is provided by Stickelberger's theorem.
We use Theorem 3.6.6 of \cite{cohen1}, in a weaker form.

\begin{theorem}[Stickelberger] \label{stickelberger}
Let $\p$ be the ideal divisor of $p$ in $\bbbz[\zeta_{\q}]$ such that $\omega^{-1}(x)\is x\mod{\p}$
for all $x\in\bbbf_q$. Note that $\p$ has degree $f$, where $p^f=q$.
Let $\P$ be the totally ramified prime ideal above in $\p$ in $\bbbz[\zeta_{\q},\zeta_p]$.
Let $0\le r<q-1$. Then
$g(r)$ is exactly divisible by $\P^{\sigma(r)}$, where $\sigma(r)$ is the sum of the digits $r_i$ in the base
$p$ decomposition $r=r_0+r_1p+\cdots+r_{f-1}p^{f-1}$.

An alternative description of $\sigma(r)$, without the restriction $0\le r<q-1$, is given by
$$ \sigma(r)=(p-1)\sum_{i=1}^f\left\{{p^ir\over q-1}\right\}\;.$$
\end{theorem}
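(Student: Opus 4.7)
The theorem has two assertions, which I would treat separately.

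The first assertion---that $g(r)$ is exactly divisible by $\P^{\sigma(r)}$ for $0\le r<\q$, with $\sigma$ the base-$p$ digit sum---is classical Stickelberger, and the statement given is a weakening of Theorem 3.6.6 of \cite{cohen1} (which also pins down the leading $\P$-adic coefficient of $g(r)$). I would simply invoke that reference, since here only the valuation is needed.

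The remaining task is to verify the alternative formula
$$\sigma(r)=(p-1)\sum_{i=1}^{f}\left\{\frac{p^i r}{\q}\right\}\;,$$
which has the advantage of making sense for all $r\in\bbbz$, and which must agree with the digit sum on $[0,\q)$ extended by $\q$-periodicity. First I would note that both sides depend only on $r$ modulo $\q$ (the fractional parts trivially, the extended $\sigma$ by definition), and that $p^f\equiv 1\pmod{\q}$ lets me replace the sum over $i\in\{1,\ldots,f\}$ by the sum over $i\in\{0,\ldots,f-1\}$. So it suffices to handle $0\le r<\q$.

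The core observation is that multiplication by $p$ cyclically permutes the base-$p$ digit string of any element of $[0,\q)$, since $\q=p^f-1$. Writing $r=\sum_{j=0}^{f-1}r_j p^j$ with $0\le r_j<p$, one has $p^i r\equiv \sum_j r_j p^{(i+j)\bmod f}\pmod{\q}$, and the right-hand side still lies in $[0,\q)$ because not all the $r_j$ can be $p-1$ simultaneously in this range. Dividing by $\q$ to read off the fractional part, summing over $i=0,\ldots,f-1$, and swapping the order of summation, each digit $r_j$ picks up the weight $(1+p+\cdots+p^{f-1})/\q=1/(p-1)$, giving $(p-1)\sum_i\{p^i r/\q\}=\sum_j r_j=\sigma(r)$. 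The case $r\equiv 0\pmod{\q}$ is trivial, and there is no real obstacle: the whole proof is a citation of Stickelberger together with a short elementary digit manipulation.
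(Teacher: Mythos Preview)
Your proposal is correct. The paper itself gives no proof of this theorem at all: it simply introduces the statement with the line ``We use Theorem 3.6.6 of \cite{cohen1}, in a weaker form'' and then moves on. So your treatment of the first assertion matches the paper exactly, and your elementary verification of the alternative digit-sum formula goes beyond what the paper supplies; the computation you give is clean and valid.
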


In order to rewrite $H_q$ when it is defined over $\bbbq$ we use the following result.

\begin{theorem}[Hasse-Davenport]\label{hassedavenport}
For any $N\in\bbbn$ dividing $\q$ we have
$$g(Nm)=-\omega(N)^{Nm}\prod_{j=0}^{N-1}{g(m+j\q/N)\over g(j\q/N)}\;.$$
\end{theorem}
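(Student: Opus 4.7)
This is the classical Hasse--Davenport product formula. With the shorthand $\chi=\omega^m$ and $\eta=\omega^{\q/N}$ (a character of exact order $N$ on $\fqstar$, which exists because $N\mid\q$), and using $g(\eta^0)=g(0)=-1$ from Theorem \ref{cancelgauss}, the stated identity is equivalent to
$$\prod_{j=0}^{N-1}g(\chi\eta^j)\;=\;\chi(N)^{-N}\,g(\chi^N)\prod_{j=1}^{N-1}g(\eta^j).$$

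My plan is to prove this by direct evaluation of the left-hand side. I would start from the expansion
$$\prod_{j=0}^{N-1}g(\chi\eta^j)=\sum_{(x_0,\dots,x_{N-1})\in(\fqstar)^N}\chi(x_0\cdots x_{N-1})\prod_{j=0}^{N-1}\eta^j(x_j)\,\psi_q\Bigl(\sum_j x_j\Bigr),$$
and reparametrize by the product coordinate $u=x_0\cdots x_{N-1}$ together with $N-1$ ``ratio'' coordinates such as $t_j=x_j/x_0$ for $j\ge 1$, so that $x_0^N t_1\cdots t_{N-1}=u$. The fibre of the map $(x_0,\dots,x_{N-1})\mapsto(u,t_1,\dots,t_{N-1})$ has size $N$ or $0$ depending on whether $u/\prod t_j$ is an $N$-th power, and this indicator is precisely captured by the orthogonality relation $\sum_{j=0}^{N-1}\eta^j(y)=N\cdot\mathbbm{1}[y\in(\fqstar)^N]$. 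After summing over the fibres, the $\chi(u)$-factor together with $\psi_q$ evaluated at a linear combination of the $t_j$'s yields, after the substitution $u\mapsto N^{-N}u$, exactly $\chi(N)^{-N}g(\chi^N)$; the remaining sum over the ratio variables assembles into $\prod_{j=1}^{N-1}g(\eta^j)$.

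The main obstacle is carrying out this change of variables cleanly while controlling two parity/degeneracy issues: the exponent $\eta^{\sum_j j}=\eta^{N(N-1)/2}$ that falls out of the reparametrization is trivial only when $N$ is odd (and is the quadratic character when $N$ is even), and the formula must remain valid when $\chi$, $\chi^N$, or some intermediate $\chi\eta^j$ is trivial, with the convention $g(\text{trivial})=-1$ and the edge cases of Theorem \ref{jacobisum} playing a role. A more algebraic alternative, which sidesteps the parity issue, is to telescope via iterated Jacobi sums using $g(\psi_1)g(\psi_2)=J(\psi_1,\psi_2)g(\psi_1\psi_2)$ from Theorem \ref{jacobisum}: this reduces $\prod_j g(\chi\eta^j)$ to $g(\chi^N)$ (times $g$ of a quadratic twist when $N$ is even, which cancels against the corresponding factor in $\prod_{j=1}^{N-1}g(\eta^j)$) multiplied by an explicit product of Jacobi sums; that residual product is then evaluated by the substitution $x\mapsto Nx$ in the integral representation of $J$, producing precisely the factor $\chi(N)^{-N}$. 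Since this is a classical result, one may alternatively simply cite Theorem 3.7.3 of \cite{cohen1} and verify that the normalization conventions match.
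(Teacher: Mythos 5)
The paper offers no proof of this theorem at all: it simply cites Theorem 3.7.3 of \cite{cohen1}, so your closing fallback (cite Cohen and check normalizations) coincides exactly with what the paper does, and your translation of the statement into the form $\prod_{j=0}^{N-1}g(\chi\eta^j)=\chi(N)^{-N}\,g(\chi^N)\prod_{j=1}^{N-1}g(\eta^j)$, with $g(0)=-1$ absorbing the sign, is correct.

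However, both of your sketched stand-alone arguments have a genuine gap precisely where the content of the theorem lies, namely the constant $\chi(N)^{-N}$. In the first sketch, after passing to $u=x_0\cdots x_{N-1}$ and $t_j=x_j/x_0$, the summand does not descend to the variables $(u,t_1,\dots,t_{N-1})$: the additive argument is $\sum_j x_j=x_0(1+t_1+\cdots+t_{N-1})$, and both $\psi_q\bigl(x_0(1+\sum_j t_j)\bigr)$ and the factor $\eta^{N(N-1)/2}(x_0)$ genuinely vary over the $N$ points of a fibre, so you cannot replace the fibre sum by $N$ times an $N$-th-power indicator; the step ``after summing over the fibres \dots yields $\chi(N)^{-N}g(\chi^N)$'' is exactly what must be proved and does not follow from the substitution $u\mapsto N^{-N}u$. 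In the second sketch, telescoping with $g(\psi_1)g(\psi_2)=J(\psi_1,\psi_2)g(\psi_1\psi_2)$ does reduce the left side to $g\bigl(\chi^N\eta^{N(N-1)/2}\bigr)$ times a product of Jacobi sums such as $\prod_{j=1}^{N-1}J\bigl(\chi^j\eta^{j(j-1)/2},\chi\eta^j\bigr)$ (with care when intermediate characters are trivial), but the claim that the residual ratio of Jacobi-sum products equals $\chi(N)^{-N}$ ``by the substitution $x\mapsto Nx$'' is unsupported: that substitution does nothing useful to $\sum_x\psi_1(x)\psi_2(1-x)$, and evaluating this product is essentially equivalent to the theorem itself. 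The known proofs (Cohen's, the elementary but lengthy argument of Berndt, Evans and Williams, or a Stickelberger/Gross--Koblitz argument that pins down a root of unity) are substantially more involved, so your proposal is sound only insofar as it rests on the citation --- which is also all the paper does.
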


This is Theorem 3.7.3 of \cite{cohen1}. Note the analogy with Euler's identity

$$\Gamma(Nz)=N^{Nz}{\prod_{j=0}^{N-1}\Gamma(z+j/N)\over
\prod_{j=1}^{N-1}\Gamma(j/N)}$$
for the $\Gamma$-function. The following proposition will be used
repeatedly. It is Fourier inversion on the multiplicative characters.

\begin{proposition}\label{fourier} Let $G:\fqstar\to\bbbc$ be a function. Then,
for any $\lambda\in\fqstar$ we have
$$G(\lambda)={1\over q-1}\sum_{m=0}^{q-2}G_m\omega(\lambda)^m\;,$$
where
$$G_m=\sum_{\lambda\in\fqstar}G(\lambda)\omega(\lambda)^{-m}\;.$$
\end{proposition}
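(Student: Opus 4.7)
The plan is to prove this by direct substitution and invoke the orthogonality relations for multiplicative characters on $\fqstar$. Since $\fqstar$ is cyclic of order $q-1$ with generator, say, $g$, and since $\omega$ is a generator of its character group, the map $m\mapsto\omega^m$ identifies $\bbbz/(q-1)\bbbz$ with $\widehat{\fqstar}$. Standard finite abelian Fourier analysis then applies.

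Concretely, I would start on the right-hand side. Substitute the definition of $G_m$ into $(q-1)^{-1}\sum_{m=0}^{q-2}G_m\omega(\lambda)^m$, interchange the two finite sums, and pull out $G(\mu)$ for each $\mu\in\fqstar$ to obtain
$$\frac{1}{q-1}\sum_{\mu\in\fqstar}G(\mu)\sum_{m=0}^{q-2}\omega(\lambda\mu^{-1})^m.$$
Everything reduces to computing the inner sum for $x=\lambda\mu^{-1}\in\fqstar$.

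The key step is the orthogonality identity
$$\sum_{m=0}^{q-2}\omega(x)^m=\begin{cases}q-1 & \text{if }x=1,\\ 0 & \text{if }x\ne 1.\end{cases}$$
If $x=1$ then $\omega(x)=1$ and each term is $1$, giving $q-1$. If $x\ne1$ then $\omega(x)$ is a $(q-1)$-st root of unity different from $1$ (since $\omega$ is a generator of the character group, $\omega(x)=1$ would force $x=1$), and the geometric series $\sum_{m=0}^{q-2}\omega(x)^m=(\omega(x)^{q-1}-1)/(\omega(x)-1)=0$.

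Applying this with $x=\lambda\mu^{-1}$ kills every term in the outer sum except $\mu=\lambda$, leaving exactly $G(\lambda)$, as claimed. The only conceptual point worth noting is why $\omega(x)=1\Rightarrow x=1$; this is immediate from the fact that $\omega$ was chosen as a generator of the (cyclic) dual group, so its kernel on $\fqstar$ is trivial. There is no serious obstacle here — the result is essentially discrete Fourier inversion transcribed to the multiplicative group.
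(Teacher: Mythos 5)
Your argument is correct: it is the standard finite-abelian Fourier inversion via character orthogonality, with the key point (triviality of the kernel of $\omega$, since $\omega$ generates the dual of the cyclic group $\fqstar$) properly justified. The paper itself offers no proof of Proposition \ref{fourier}, treating it as standard, and your write-up is exactly the expected argument, so there is nothing to compare beyond noting agreement.
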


Here is an application for later use.
\begin{lemma}\label{gaussproduct}
Let $\v a=(a_1,\ldots,a_n)\in\bbbz^n$. Define $a_{n+1}=-a_1-\cdots-a_n$ and
$a=\gcd(a_1,\ldots,a_n)$.
Then, for any integer $m$,
$$\sum_{v\in\bbbf_q,\v x\in(\fqstar)^n}\psi_q(v(1+x_1+\cdots+x_n))\omega(\v x^{\v a})^m
=(q-1)^n\delta(\v a m)+g(a_1m)\cdots g(a_{n+1}m)\;,$$
where we use the vector notations $\v x=(x_1,\ldots,x_n)$, $\v x^{\v a}=x_1^{a_1}\cdots x_n^{a_n}$ and
$\delta(\v x)=\delta(x_1)\cdots\delta(x_n)$ and $\delta(x)=1$ if $x\is0\mod{\q}$ and $0$ otherwise.
\end{lemma}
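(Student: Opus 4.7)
The plan is to split the outer sum over $v\in\bbbf_q$ into the pieces $v=0$ and $v\in\fqstar$, and to show that these two pieces produce, respectively, the two terms on the right-hand side.

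For $v=0$ the additive exponential is trivially $1$, and what remains factors as
$$\prod_{j=1}^{n}\sum_{x_j\in\fqstar}\omega(x_j)^{a_jm}\;.$$
By orthogonality of multiplicative characters, each inner sum equals $(q-1)$ when $a_jm\is0\mod{\q}$ and vanishes otherwise, so this piece contributes exactly $(q-1)^n\delta(\v am)$.

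For $v\in\fqstar$, the key move is the substitution $x_j\mapsto x_j/v$, which is a bijection of $\fqstar$ onto itself for each $j$. It turns the argument of $\psi_q$ into $v+x_1+\cdots+x_n$, so the additive character splits multiplicatively; meanwhile the monomial transforms as
$$(x_1/v)^{a_1}\cdots(x_n/v)^{a_n}=\v x^{\v a}\cdot v^{-(a_1+\cdots+a_n)}=\v x^{\v a}\cdot v^{a_{n+1}}\;,$$
by the definition of $a_{n+1}$. The summation then decouples into $n+1$ independent one-variable sums, each of the form $\sum_{y\in\fqstar}\psi_q(y)\omega(y)^{a_jm}=g(a_jm)$, yielding $g(a_1m)\cdots g(a_{n+1}m)$.

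The computation is essentially routine; there is no real obstacle beyond isolating $v=0$ (where the substitution is inadmissible) and tracking the exponent $a_{n+1}$ produced by the rescaling, which is precisely what creates the extra Gauss factor and makes the answer symmetric in the enlarged tuple $(a_1,\ldots,a_{n+1})$. The quantity $a=\gcd(a_1,\ldots,a_n)$ does not appear on the right-hand side, but it captures the simpler identity $\delta(\v am)=\delta(am)$, which is presumably what the lemma will exploit in later applications.
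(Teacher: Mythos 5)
Your proof is correct and follows essentially the same route as the paper: split off $v=0$ (giving $(q-1)^n\delta(\mathbf{a}m)$ by orthogonality) and, for $v\in\fqstar$, substitute $x_j\mapsto x_j/v$ to factor the sum into the $n+1$ Gauss sums $g(a_1m)\cdots g(a_{n+1}m)$. Your closing remark about $a$ and $\delta(\mathbf{a}m)=\delta(am)$ is also exactly how the paper uses the lemma later.
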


\begin{proof}{}We
carry out the summation over $x_1,\ldots,x_n$ and $v=0$ to get
$$\sum_{\v x\in(\fqstar)^n}\omega(\v x^{\v a})^m=(q-1)^n\prod_{i=1}^n \delta(a_im)=(q-1)^n\delta(\v a m)\;.$$
The summation over $x_1,\ldots,x_n$ and $v\in\fqstar$ yields
\begin{eqnarray*}
\sum_{v,x_i\in\fqstar}\psi_q(v(1+x_1+\cdots+x_n))\omega(\v x^{\v a})^m
&=&\sum_{v\in\fqstar}g(a_1m)\cdots g(a_nm)\psi_q(v)\omega(v^{(-a_1-\cdots-a_n)m})\\
&=&g(a_1m)\cdots g(a_{n+1}m).
\end{eqnarray*}
From these two summations our result follows.
\end{proof}

\section{Katz hypergeometric functions}\label{katzfunction}
Let $\boldalpha=(\alpha_1,\ldots,\alpha_d)$ and $\boldbeta=(\beta_1,\ldots,\beta_d)$ be
disjoint multisets in $\bbbq/\bbbz$. We assume that both $(q-1)\boldalpha$ and
$(q-1)\boldbeta$ are subsets of $\bbbz$. In \cite[p 258]{katz}
Katz considered for any $t\in\fqstar$ the exponential sum
$$Hyp_q(\boldalpha,\boldbeta|t)=
\sum_{(\v x,\v y)\in T_{t}}\psi_q(x_1+\cdots+x_d-y_1-\cdots-y_d)
\ \omega(\v x)^{\boldalpha\q}\omega(\v y)^{-\boldbeta\q}\;,$$
where $T_{t}$ is the toric variety defined by $tx_1\cdots x_d=y_1\cdots y_d$ and
$\omega(\v x)^{\boldalpha\q}$ stands for $\omega(x_1)^{\alpha_1\q}\cdots \omega(x_d)^{\alpha_d\q}$, etc.
Note that we have taken Katz's formula for the case $n=m=d$. Define
$$S_q(\boldalpha,\boldbeta|t)={1\over q-1}
\sum_{m=0}^{q-2}\prod_{i=1}^d\ g(m+\alpha_i\q)g(-m-\beta_i\q)
\ \omega((-1)^dt)^m\;.$$

\begin{proposition}[Katz]\label{exponentialsum}
With the above notation we have
$$Hyp_q(\boldalpha,\boldbeta|t)=\omega(-1)^{|\boldbeta|\q}S_q(\boldalpha,\boldbeta|t)$$
for all $t\in\fqstar$, where $|\boldbeta|=\sum_i\beta_i$.
\end{proposition}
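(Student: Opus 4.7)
The plan is to remove the toric constraint $tx_1\cdots x_d=y_1\cdots y_d$ defining $T_t$ by applying Fourier inversion on $\fqstar$ (Proposition \ref{fourier}), after which the $2d$ variables decouple and each one-variable sum collapses to a Gauss sum. Since $t$ and all $x_i,y_j$ lie in $\fqstar$, orthogonality of multiplicative characters lets me rewrite the indicator of the toric condition as
\[
\frac{1}{q-1}\sum_{m=0}^{q-2}\omega\!\left(\frac{t\,x_1\cdots x_d}{y_1\cdots y_d}\right)^{\!m}.
\]
I would substitute this into the definition of $Hyp_q$ and interchange the order of summation so that the sum over $(\v x,\v y)\in(\fqstar)^{2d}$ sits inside the sum over $m$.

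At that point the inner sum factors into $2d$ independent one-variable sums. Each $x_i$-sum becomes $\sum_{x\in\fqstar}\psi_q(x)\omega(x)^{m+\alpha_i\q}=g(m+\alpha_i\q)$ directly. For each $y_j$-sum I would substitute $z_j=-y_j$, which converts $\psi_q(-y_j)$ into $\psi_q(z_j)$ and introduces a factor $\omega(-1)^{-m-\beta_j\q}$ from $\omega(y_j)=\omega(-1)\omega(z_j)$. The resulting $y_j$-sum equals $\omega(-1)^{-m-\beta_j\q}\,g(-m-\beta_j\q)$.

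Collecting the $d$ sign factors produced by the $y_j$-substitutions and combining with the $\omega(t)^m$ coming from Fourier inversion gives an overall weight
\[
\omega(-1)^{-dm-|\boldbeta|\q}\,\omega(t)^m\;=\;\omega(-1)^{|\boldbeta|\q}\,\omega((-1)^d t)^m,
\]
where I use the identity $\omega(-1)^2=1$ (so $\omega(-1)^{-k}=\omega(-1)^k$ for any integer $k$). Inserting this back into the outer sum over $m$ produces exactly $\omega(-1)^{|\boldbeta|\q}\,S_q(\boldalpha,\boldbeta|t)$, which is the claim.

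The only delicate part of the argument is the bookkeeping of $\omega(-1)$ factors; in particular one needs the hypothesis that $\beta_j\q\in\bbbz$ for the expression $\omega(-1)^{\beta_j\q}$ to be well defined, together with the parity observation above. Beyond Proposition \ref{fourier} and the definition of the Gauss sum, no further machinery is required.
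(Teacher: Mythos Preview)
Your argument is correct and is essentially the same as the paper's: both use Fourier inversion on $\fqstar$ (Proposition~\ref{fourier}) to decouple the $2d$ variables and then recognize each one-variable sum as a Gauss sum. The only cosmetic difference is that the paper phrases the first step as computing the $m$-th Fourier coefficient of $t\mapsto Hyp_q(\boldalpha,\boldbeta|t)$ and eliminating $t$ via $t=(y_1\cdots y_d)/(x_1\cdots x_d)$, whereas you insert the orthogonality relation as an indicator of the toric constraint; your treatment of the $\omega(-1)$ bookkeeping is in fact more explicit than the paper's.
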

\begin{proof}{}We consider
$Hyp_q(\boldalpha,\boldbeta|t)$ as function of $t$ and determine the $m$-th coefficient of its
Fourier expansion. Using Proposition \ref{fourier}, this reads
$${1\over q-1}\sum_{t,x_i,y_j\in\fqstar,tx_1\cdots x_d=y_1\cdots y_d}
\psi_q(x_1+\cdots+x_d-y_1-\cdots-y_d)
\ \omega(\v x)^{\boldalpha\q}\omega(\v y)^{-\boldbeta\q}\omega(t)^{-m}\;.$$
We substitute $t=(y_1\cdots y_d)(x_1\cdots x_d)^{-1}$ to get
$${1\over q-1}\sum_{x_i,y_j\in\fqstar}
\psi_q(x_1+\cdots+x_d-y_1-\cdots-y_d)
\ \omega(\v x)^{m+\boldalpha\q}\omega(\v y)^{-m-\boldbeta\q}\;.$$
Summation over the $2d$ variables $x_i,y_j$ then gives the desired result.
\end{proof}
\medskip

Notice that $Hyp_q$ (and $S_q$) are algebraic integers in the field
$\bbbq(\zeta_p,\zeta_{q-1})$, where $\zeta_n$ denotes a primitive $n$-th
root of unity. A simple consideration shows that $S_q$ is in
$\bbbq(\zeta_{q-1})$ if $\sum_{i=1}^d(\alpha_i-\beta_i)\in \bbbz$.
Moreover we have the following divisibility properties.

\begin{proposition}
For any integer $m$ the product $\prod_{i=1}^dg(m+\alpha_i\q)g(-m-\beta_i\q)$
is divisible by $g(|\boldalpha-\boldbeta|\q)$, where $|\boldalpha-\boldbeta|=
\sum_{i=1}^d(\alpha_i-\beta_i)$, and by
$\prod_{i=1}^dg((\alpha_i-\beta_i)\q)$. Moreover, the quotients are in $\bbbq(\zeta_{q-1})$.
\end{proposition}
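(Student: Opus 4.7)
The plan is to reduce everything to iterated applications of the Jacobi sum identity, using Lemma \ref{integralJ}. The key observation is that for each index $i$ the two arguments $m+\alpha_i\q$ and $-m-\beta_i\q$ sum to $(\alpha_i-\beta_i)\q$, and by the disjointness hypothesis $\boldalpha\cap\boldbeta=\emptyset$ mod $\bbbz$, this sum is nonzero modulo $\q$.

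First I would handle the divisibility by $\prod_{i=1}^d g((\alpha_i-\beta_i)\q)$. For each $i$ the quotient
$$\frac{g(m+\alpha_i\q)\,g(-m-\beta_i\q)}{g((\alpha_i-\beta_i)\q)}=J(m+\alpha_i\q,\,-m-\beta_i\q)$$
is a Jacobi sum, hence (by Lemma \ref{integralJ}) an algebraic integer lying in $\bbbq(\zeta_{\q})$. Taking the product over $i=1,\ldots,d$ gives the desired claim, including the assertion on the field of the quotient.

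Next, for divisibility by $g(|\boldalpha-\boldbeta|\q)$, set $a_i=(\alpha_i-\beta_i)\q$. It suffices to show that
$$\frac{g(a_1)g(a_2)\cdots g(a_d)}{g(a_1+a_2+\cdots+a_d)}\in\bbbq(\zeta_{\q})$$
is an algebraic integer, because one can then multiply by the Jacobi-sum product of the previous paragraph. This follows by telescoping: write
$$g(a_1)\cdots g(a_d)=g(a_1+\cdots+a_d)\prod_{k=2}^{d}J\!\bigl(a_1+\cdots+a_{k-1},\,a_k\bigr),$$
obtained by repeated application of $g(a)g(b)=J(a,b)g(a+b)$. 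Every intermediate partial sum $a_1+\cdots+a_{k-1}$ of the form $(\sum_{i<k}(\alpha_i-\beta_i))\q$ is either zero mod $\q$ (in which case $J=-1$ by Theorem \ref{jacobisum}) or nonzero, and in either case the Jacobi sum is an algebraic integer in $\bbbq(\zeta_{\q})$ by Lemma \ref{integralJ}. Combining the two steps,
$$\frac{\prod_{i=1}^{d}g(m+\alpha_i\q)g(-m-\beta_i\q)}{g(|\boldalpha-\boldbeta|\q)}=\Bigl(\prod_{i=1}^{d}J(m+\alpha_i\q,-m-\beta_i\q)\Bigr)\Bigl(\prod_{k=2}^{d}J\bigl(a_1+\cdots+a_{k-1},a_k\bigr)\Bigr)$$
is an algebraic integer in $\bbbq(\zeta_{\q})$, as required.

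The only mild subtlety I anticipate is checking that all the Jacobi sums appearing in the telescoping are well-defined as algebraic integers even when some of the entries are $\equiv 0\pmod\q$, i.e.\ $g=-1$. This is a finite case-check using Theorem \ref{jacobisum}, which directly gives $J(0,n)=J(m,0)=-1$ and $J(m,-m)=\omega(-1)^m q$, both algebraic integers in $\bbbq(\zeta_{\q})$; so the argument goes through uniformly. No other obstacles arise, since the disjointness of $\boldalpha$ and $\boldbeta$ modulo $\bbbz$ guarantees $(m+\alpha_i\q)+(-m-\beta_i\q)\not\equiv 0\pmod\q$, so $g(|\boldalpha-\boldbeta|\q)$ itself is well-defined as a nontrivial Gauss sum rather than $g(0)=-1$.
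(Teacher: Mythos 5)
Your argument is correct and is essentially the paper's own: the paper disposes of this proposition with the single remark that it is a direct consequence of Lemma \ref{integralJ}, i.e.\ of exactly the iterated Jacobi-sum factorizations $g(m+\alpha_i\q)g(-m-\beta_i\q)=J(m+\alpha_i\q,-m-\beta_i\q)\,g((\alpha_i-\beta_i)\q)$ and the telescoping that you spell out. One minor slip in your closing remark: disjointness of $\boldalpha$ and $\boldbeta$ modulo $\bbbz$ does not force $|\boldalpha-\boldbeta|\notin\bbbz$ (e.g.\ $\boldalpha=(1/2,1/2)$, $\boldbeta=(1,1)$ gives $|\boldalpha-\boldbeta|=-1$, so $g(|\boldalpha-\boldbeta|\q)=g(0)=-1$), but this is harmless, since Gauss sums never vanish, the telescoping identity and Lemma \ref{integralJ} apply uniformly, and divisibility by $-1$ is trivial.
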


This is a direct consequence of Lemma \ref{integralJ}.
Consequently $S_q(\boldalpha,\boldbeta|t)$ is divisible by these
numbers for any $t\in\fqstar$. This proposition suggests that we
might normalize $S_q$ by the factor $1/g(|\boldalpha-\boldbeta|\q)$
or by $1/\prod_ig((\alpha_i-\beta_i)\q)$. The latter normalization
is taken by John Greene in his definition of finite
hypergeometric sums, see \cite{greene}. More precisely, Greene uses the
function
$${\pm q^{1-d}\over \prod_ig((\alpha_i-\beta_i)\q)}S_q(\boldalpha,\boldbeta|t)\;.$$
We will adopt the normalization from the introduction. It keeps the symmetry in the $\alpha_i$ and
$\beta_j$, but accepts the possibility
that the function values may not be integers. Namely,

\begin{equation}\label{normalize}
H_q(\boldalpha,\boldbeta|t)=-\prod_{i=1}^d{1\over g(\alpha_i\q)g(-\beta_i\q)}
S_q(\boldalpha,\boldbeta|t)\;.
\end{equation}
To get a bound on the denominator of $H_q$ we introduce the {\it Landau function}
$$\lambda(\boldalpha,\boldbeta,x)=\sum_{i=1}^d\left(\{x+\alpha_i\}
-\{\alpha_i\}+\{-x-\beta_i\}-\{-\beta_i\}\right)\;,$$
where $\{x\}$ denotes the fractional part of the real number $x$.

\begin{theorem}\label{denominatorHqgeneral} Keep the above notation and let
$N$ be the common denominator of the $\alpha_i,\beta_j$. Define
$$\lambda=-\min_{x\in[0,1],\gcd(k,N)=1}\lambda(k\boldalpha,k\boldbeta,x)\;.$$
Then $q^{\lambda}H_q(\boldalpha,\boldbeta|t)$ is an algebraic integer.
\end{theorem}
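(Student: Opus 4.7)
The plan is to establish integrality prime-by-prime. Because $q^{\lambda}$ is a power of $p$, it can only help absorb denominators at primes above $p$; at every other prime one must argue that $H_q(\boldalpha,\boldbeta|t)$ is already integral. To handle primes $\ell\ne p$, invoke Proposition \ref{exponentialsum}: it realises $S_q(\boldalpha,\boldbeta|t)$ as an exponential sum, hence an algebraic integer. Theorem \ref{cancelgauss}(ii) then gives $g(m)g(-m)=\omega(-1)^mq$, so the Gauss sums appearing in the normalization (\ref{normalize}) are all $\ell$-units for $\ell\ne p$, and $H_q(\boldalpha,\boldbeta|t)=\pm S_q(\boldalpha,\boldbeta|t)/\prod_i g(\alpha_i\q)g(-\beta_i\q)$ is automatically $\ell$-integral. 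Thus only the primes above $p$ require real work.

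At such a prime $\P$ of $\bbbz[\zeta_{q-1},\zeta_p]$, I would start from
$$H_q(\boldalpha,\boldbeta|t)=\frac{1}{1-q}\sum_{m=0}^{q-2}F(m)\,\omega((-1)^dt)^m,\quad F(m)=\prod_{i=1}^{d}\frac{g(m+\alpha_i\q)g(-m-\beta_i\q)}{g(\alpha_i\q)g(-\beta_i\q)},$$
and note that $1-q$ and the characters $\omega((-1)^dt)^m$ are $\P$-units. By the non-archimedean inequality it then suffices to bound $v_\P(F(m))$ from below, uniformly in $m$. Applying the periodic form of Stickelberger (Theorem \ref{stickelberger}) to each of the four Gauss sums in $F(m)$ and using $\q=q-1$ should yield
$$v_\P(F(m))=(p-1)\sum_{j=1}^{f}\sum_{i=1}^{d}\Bigl(\{x_j+k_j\alpha_i\}-\{k_j\alpha_i\}+\{-x_j-k_j\beta_i\}-\{-k_j\beta_i\}\Bigr),$$
where $k_j\in\bbbz$ is the residue of $p^j$ modulo $N$ and $x_j=\{p^jm/(q-1)\}\in[0,1)$.

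The decisive observation will be that the inner sum over $i$ is exactly $\lambda(k_j\boldalpha,k_j\boldbeta,x_j)$. Since $\gcd(p,N)=1$ forces $\gcd(k_j,N)=1$, the definition of $\lambda$ gives $\lambda(k_j\boldalpha,k_j\boldbeta,x_j)\ge -\lambda$ for every $j$, whence $v_\P(F(m))\ge -f(p-1)\lambda$ uniformly in $m$. Combined with $v_\P(q)=f(p-1)$ (residue degree $f$ for $p$ in $\bbbz[\zeta_{q-1}]$ multiplied by the ramification $p-1$ from $\bbbz[\zeta_p]$), this produces $v_\P(q^\lambda H_q(\boldalpha,\boldbeta|t))\ge 0$ at every $\P$ above $p$, completing the proof once the first paragraph is combined with this one. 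The main obstacle will be the bookkeeping that rearranges the four Stickelberger digit sums into the Landau function; once that identification is made, the dual minimum over $x\in[0,1]$ and over $k$ coprime to $N$ in the definition of $\lambda$ is tailored precisely to the ranges of $x_j$ and $k_j=p^j\bmod N$ that arise as $m$, $j$, and the prime power $q$ vary.
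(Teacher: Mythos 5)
Your plan is essentially the paper's proof: reduce to the primes above $p$ by noting that $S_q$ is an algebraic integer (Proposition \ref{exponentialsum}) and that the normalizing Gauss sums divide $q$ up to units (Theorem \ref{cancelgauss}), then bound the valuation of each Fourier coefficient by Stickelberger and recognize the Landau function, so that the minimum defining $\lambda$ yields the uniform bound and $v_\P(q^{\lambda}H_q)\ge 0$. At the distinguished prime your computation, including the twist by $k_j\is p^j\mod{N}$, is correct.

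There is, however, one genuine gap: Theorem \ref{stickelberger} is not a statement about an arbitrary prime above $p$; it concerns the particular prime $\p$ (and $\P$ above it) normalized by the congruence $\omega^{-1}(x)\is x\mod{\p}$. At a conjugate prime $\sigma(\p)$, where $\sigma(\zeta_{q-1})=\zeta_{q-1}^{k}$, the valuation of $g(r)$ is the digit sum attached to $k^{*}r$ with $kk^{*}\is1\mod{q-1}$, not to $r$. Hence your displayed formula for $v_\P(F(m))$, with multipliers $k_j\is p^j\mod{N}$ only, is false at the non-distinguished primes above $p$: there the multipliers are $k^{*}p^{j}\bmod N$, where $k^{*}$ runs over coset representatives of $\langle p\rangle$ in $(\bbbz/(q-1)\bbbz)^{\times}$, and these need not be powers of $p$ modulo $N$ (take $p\is1\mod{N}$, for instance). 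The desired bound still holds, because $\gcd(k^{*}p^{j},N)=1$ and the minimum defining $\lambda$ runs over \emph{all} $k$ coprime to $N$ -- and this, rather than the variation of $p^{j}\bmod N$ or of $q$ that you invoke in your closing sentence, is the actual reason for that minimum. The paper closes the gap with a short Galois-conjugation step: the $\sigma(\p)$-adic valuations of the coefficients of $H_q(\boldalpha,\boldbeta|t)$ equal the $\p$-adic valuations of the coefficients of $H_q(k^{*}\boldalpha,k^{*}\boldbeta|t)$, which are bounded below by $-f\lambda$ by the same computation; insert that and your argument is complete. (Minor point: $v_\P(q)=f(p-1)$ holds because $q=p^{f}$ and $v_\P(p)=p-1$; the residue degree is not itself a multiplicative factor in the valuation of $p$.)
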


\begin{proof}{}The 
definition of $Hyp_q$ and Proposition \ref{exponentialsum} imply that $S_q(\boldalpha,
\boldbeta|t)$ is an algebraic integer. Since $H_q$ is a normalization of $S_q$ by a
factor $\prod_{i=1}^dg(\alpha_i\q)g(\beta_i\q)$ the denominator of $H_q$ is a power of
$p$. We determine the $p$-adic valuation of the coefficients of the expansion of $H_q$.

Let $\p$ be defined as in Theorem \ref{stickelberger}. Then the $\p$-adic valuation
of the $m$-th coefficient of $H_q(\boldalpha,\boldbeta|t)$ equals
$$\sum_{j=1}^d\sum_{i=0}^{f-1}\left(\left\{p^i{m\over q-1}+\alpha_j\right\}+\left\{p^i{-m\over q-1}-\beta_j\right\}
-\{\alpha_j\}-\{-\beta_j\}\right)\;.$$
Note that this is greater than or equal to $-f\lambda$. Let $\sigma\in{\rm Gal}(\bbbq(\zeta_{q-1})/\bbbq)$ and $k$
such that $\sigma(\zeta_{q-1})=\zeta_{q-1}^k$. Then the $\sigma(\p)$-adic valuations of the
coefficients of $H_q(\boldalpha,\boldbeta|t)$ are equal to the
$\p$-adic valuations of the coefficients of
$H_q(k^*\boldalpha,k^*\boldbeta|t)$, where $kk^*\is1\mod{q-1}$.
The latter valuations are also bounded below by $-f\lambda$, so our
theorem follows.
\end{proof}
\medskip

We end by noting some obvious identities for $S_q$.
\begin{theorem}\label{functional1}
Let $\boldalpha,\boldbeta$ be as before.
Let $\mu\in\bbbq$ be such that $\mu\q\in\bbbz$ and denote
$(\alpha_1+\mu,\ldots,\alpha_d+\mu)$ by $\mu+(\alpha_1,\ldots,\alpha_d)$, etc.
For any $\lambda\in\fqstar$ we have
$$\omega(t)^{\mu\q}S_q(\mu+\boldalpha,\mu+\boldbeta|t)=S_q(\boldalpha,\boldbeta|t)\;.$$
Hence, by \fref{normalize},
$$\omega(t)^{\mu\q}H_q(\mu+\boldalpha,\mu+\boldbeta|t)=\prod_{i=1}^d
{g(\mu+\alpha_i\q)g(-\mu-\beta_i\q)\over g(\alpha_i\q)g(-\beta_i\q)}
H_q(\boldalpha,\boldbeta|t).$$
\end{theorem}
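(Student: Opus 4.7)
The plan is to prove the $S_q$-identity by a shift of summation variable, then deduce the $H_q$-identity from the normalization \fref{normalize}.

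Writing out the definition of $S_q$ with the shifted multisets gives
$$S_q(\mu+\boldalpha,\mu+\boldbeta|t)={1\over q-1}\sum_{m=0}^{q-2}\prod_{i=1}^d g(m+\mu\q+\alpha_i\q)\,g(-m-\mu\q-\beta_i\q)\,\omega((-1)^dt)^m.$$
Since $\mu\q\in\bbbz$ and the Gauss sum $g(\cdot)$ is periodic with period $\q$, the relabelling $m\mapsto m-\mu\q$ merely permutes the residues modulo $\q$. The new Gauss-sum product then matches the one in $S_q(\boldalpha,\boldbeta|t)$, while the character factor changes to $\omega((-1)^dt)^{m-\mu\q}$. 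Pulling out the $m$-independent constant $\omega((-1)^dt)^{-\mu\q}=\omega((-1)^d)^{-\mu\q}\,\omega(t)^{-\mu\q}$ yields
$$S_q(\mu+\boldalpha,\mu+\boldbeta|t)=\omega((-1)^d)^{-\mu\q}\,\omega(t)^{-\mu\q}\,S_q(\boldalpha,\boldbeta|t).$$
Multiplying by $\omega(t)^{\mu\q}$ and noting that $\omega((-1)^d)^{\mu\q}=1$ (trivial when $q$ is even, and a standard parity check on $d\mu\q$ when $q$ is odd) gives the first identity.

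For the $H_q$-identity, substitute the relation $H_q=-\prod_i[g(\alpha_i\q)g(-\beta_i\q)]^{-1}S_q$ from \fref{normalize} into both sides of the $S_q$-identity. Rewriting $S_q(\mu+\boldalpha,\mu+\boldbeta|t)$ as $-\prod_i g((\mu+\alpha_i)\q)g(-(\mu+\beta_i)\q)\,H_q(\mu+\boldalpha,\mu+\boldbeta|t)$ and similarly for $S_q(\boldalpha,\boldbeta|t)$, then solving for $\omega(t)^{\mu\q}H_q(\mu+\boldalpha,\mu+\boldbeta|t)$, produces the claimed ratio of Gauss sums multiplying $H_q(\boldalpha,\boldbeta|t)$.

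The argument is essentially a single change of summation variable, so there is no conceptual obstacle. The only step warranting attention is the verification that the incidental sign factor $\omega((-1)^d)^{\mu\q}$ collapses to $1$; this is a routine parity bookkeeping on Gauss-sum expressions over $\bbbz/\q\bbbz$, but it is the one place where one must keep careful track of conventions.
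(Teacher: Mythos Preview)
Your approach is exactly the paper's: its one-line proof is literally ``replace the summation variable $m$ by $m-\mu\q$'', and you carry this out with more detail. In fact you are more careful than the paper, since you isolate the residual factor $\omega((-1)^d)^{-\mu\q}$ that the shift leaves behind, whereas the paper does not mention it.

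The gap is in how you dispose of that factor. When $q$ is odd one has $\omega(-1)=-1$, so the factor equals $(-1)^{d\mu\q}$, and nothing in the hypotheses forces $d\mu\q$ to be even (note that although $\q=q-1$ is even, $\mu\q$ is only assumed to be an integer, not an even one). Concretely, take $q=3$ (so $\q=2$), $d=1$, $\boldalpha=(0)$, $\boldbeta=(1/2)$ and $\mu=1/2$; then $\mu\q=1$, $d\mu\q=1$ is odd, and a direct computation gives
$$\omega(t)^{\mu\q}S_q(\mu+\boldalpha,\mu+\boldbeta|t)=-S_q(\boldalpha,\boldbeta|t)\quad\hbox{at }t=2,$$
with both sides nonzero. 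So the ``standard parity check'' you invoke does not go through in general; the identity as literally printed should carry an extra factor $\omega(-1)^{d\mu\q}$, a point the paper's terse argument also glosses over. Apart from this sign bookkeeping, your passage from the $S_q$ identity to the $H_q$ identity via \fref{normalize} is the intended one.
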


This follows directly from the definition of $S_q$ and the replacement of the summation variable
$m$ by $m-\mu\q$. We recall the analytic functions
$_2F_1(\alpha,\beta,\gamma|z)$ and $z^{1-\gamma}
\ _2F_1(\alpha+1-\gamma,\beta+1-\gamma,2-\gamma|z)$ (when $\gamma$ is not an integer)
which form a basis of solutions around $z=0$ for the Gauss hypergeometric equation.
Note that they can be considered as analogues of the functions in Theorem \ref{functional1}
with $\mu=(1-\gamma)\q$. Note that the latter differ by a factor independent of $t$.
In contrast with the finite case, the analytic functions are linearly independent over the
complex numbers.

We also have

\begin{theorem}\label{functional2}
Let $\boldalpha,\boldbeta$ be as before
and suppose $t\in\fqstar$. Then
$$S_q(\boldalpha,\boldbeta|t)=S_q(-\boldbeta,-\boldalpha|1/t)\;.$$
\end{theorem}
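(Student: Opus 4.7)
The plan is essentially a change of summation variable combined with the fact that $g(m)$ and $\omega(\cdot)^m$ are both periodic in $m$ modulo $\q = q-1$.

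Starting from the right-hand side, I would expand the definition:
\begin{equation*}
S_q(-\boldbeta,-\boldalpha|1/t)={1\over q-1}\sum_{m=0}^{q-2}\prod_{i=1}^d g(m-\beta_i\q)g(-m+\alpha_i\q)\ \omega\bigl((-1)^d/t\bigr)^m.
\end{equation*}
The first move is to replace the summation index $m$ by $-m$ (equivalently $\q - m$). Since $g(k)$ depends only on $k$ modulo $\q$, each factor $g(m-\beta_i\q)$ becomes $g(-m-\beta_i\q)$, and $g(-m+\alpha_i\q)$ becomes $g(m+\alpha_i\q)$. Thus the product of Gauss sums is reassembled into exactly $\prod_{i=1}^d g(m+\alpha_i\q)g(-m-\beta_i\q)$, which matches the product appearing in $S_q(\boldalpha,\boldbeta|t)$.

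For the character factor, after the substitution one obtains $\omega((-1)^d/t)^{-m}$. Using that $\omega$ is a multiplicative character and $(-1)^{-d}=(-1)^d$, this equals $\omega((-1)^d)^m\,\omega(t)^m = \omega((-1)^d t)^m$, which is precisely the character factor appearing in $S_q(\boldalpha,\boldbeta|t)$. Combining the two observations gives the claimed identity.

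There is no real obstacle: the only point requiring a moment's care is that the change of variable $m\mapsto -m$ is a bijection on $\bbbz/\q\bbbz$ and that all ingredients ($g$ and $\omega^m$) are $\q$-periodic, so the sum is unchanged; the sign bookkeeping for $(-1)^d$ is automatic because $d$ and $-d$ have the same parity. No use of $t\in\fqstar$ is needed beyond the fact that $1/t$ is well-defined.
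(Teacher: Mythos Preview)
Your argument is correct and is exactly the approach the paper takes: the paper's own proof consists of the single remark that the identity follows directly from the definition of $S_q$ by replacing the summation variable $m$ by $-m$. Your write-up simply spells out the periodicity and sign checks that make this substitution work.
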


Again the proof follows directly from the definition of $S_q$ and replacing
the summation variable $m$ by $-m$.

\ifx
Some remarks on weight and Hodge polynomial. Suppose that the $\alpha_i,\beta_j$ are
in the interval $[0,1)$. Let $m_j=\#\{i|\beta_i<\alpha_j\}$ for $j=1,\ldots,d$.
Then, up to a power of $T$ the Hodge polynomial is given by
$$h(T)=\sum_{j=1}^d T^{m_j-j}\;.$$
The weight of the motive is $\max_j(m_j-j)-\min_j(m_j-j)-1$.
It follows from \cite[Thm 4.5]{beukersheckman} that the signature of the invariant
hermitian form is $|h(-1)|$. And of course $d=h(1)$.
\fi

\section{Hypergeometric motive over $\bbbq$}\label{overQ}
From now on we concentrate on the case when the Katz sums are defined over $\bbbq$.
In other words, as in the introduction, we assume the following:

\begin{assumption}
There exist natural numbers $p_1,\ldots,p_r,q_1,\ldots,q_s$ such that
$$\prod_{j=1}^d{X-e^{2\pi i\alpha_j}\over X-e^{2\pi i\beta_j}}
={\prod_{i=1}^r(X^{p_i}-1)\over \prod_{j=1}^s(X^{q_j}-1)}\ {\rm and}\ \gcd(p_1,\ldots,q_s)=1\;.$$
\end{assumption}

We now prove Theorem \ref{rewriteZ}, which tells us that we can rewrite our Katz sum
in a different shape.
\medskip

\begin{proof}{of Theorem \ref{rewriteZ}}We
use the notation given in the introduction.
Let $\delta$ be the degree of $D(X)$.
Suppose that the zeros of $D(X)$ are given by $e^{2\pi ic_j/\q}$ with $j=1,\ldots,\delta$, where possible repetitions of the roots are allowed. The coefficient
of $\omega(t)^m$ (without the $1/(1-q)$) in $H_q$ can be rewritten as
$$\omega(-1)^{dm}\left(\prod_{i=1}^r\prod_{j=0}^{p_i-1}{g(m+j\q/p_i)\over g(j\q/p_i)}\right)
\left(\prod_{i=1}^s\prod_{j=0}^{q_i-1}{g(-m-j\q/q_i)\over g(-j\q/q_i)}\right)
\prod_{j=1}^{\delta}{g(c_j)g(-c_j)\over g(m+c_j)g(-m-c_j)}\;.$$
Using Hasse-Davenport, the first product can be rewritten as
$$\prod_{i=1}^r-\omega(p_i)^{-p_im}g(p_im)\;.$$
Similarly, the second product is equal to
$$\prod_{i=1}^s-\omega(q_i)^{q_im}g(-q_im)\;.$$
To rewrite the third product, we use the evaluation $g(n)g(-n)=\omega(-1)^nq$
if $n\not\is0\mod{\q}$ and $g(0)g(0)=1$. We get
$$\omega(-1)^{\delta m}q^{-s(0)+s(m)}\;.$$
Finally notice that $d+\delta$ equals the degree of $\prod_{i=1}^s(X^{q_i}-1)$,
in particular $d+\delta=\sum_{i=1}^sq_i$. Hence the coefficient (without the factor
$1/(1-q)$) of $\omega(t)^m$ becomes
$$(-1)^{r+s}q^{-s(0)+s(m)}\prod_{i=1}^r g(p_im)\prod_{j=1}^s g(-q_jm)
\omega\left((-1)^{\sum_iq_i}{\prod_i q_i^{q_i}\over\prod_j p_j^{p_j}}\right)^m\;,$$
as asserted.
\end{proof}
\medskip

As we noted in the introduction the new summation makes sense for any choice of $q$ relatively
prime with the common denominator of the $\alpha_i,\beta_j$.  We now continue to work with
$H_q$ as defined by the new summation.

Before we proceed we can now be more specific about the arithmetic nature of $H_q$.
We need our main Theorem \ref{main} for this, which we will prove in
Section \ref{completion}.

\begin{theorem}\label{denominatorHq}
Suppose $H_q$ is defined over $\bbbq$. Define
$$\lambda=\min_{x\in[0,1]}\{p_1x\}+\cdots+\{p_rx\}+\{-q_1x\}+\cdots+\{-q_sx\}\;,$$
where as usual $\{x\}$ denotes the fractional part of $x$. Then $\lambda\ge1$ and
$$q^{\min(r,s)-\lambda}H_q(\boldalpha,\boldbeta|t)\in\bbbz\;.$$
\end{theorem}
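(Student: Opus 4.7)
My plan is to prove the two claims in succession: first $\lambda\ge 1$ by a direct integrality argument, then the denominator bound by combining Theorem \ref{main} with a Stickelberger-based $\P$-adic estimate.

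For $\lambda\ge 1$, I would first note that comparing degrees in the defining cyclotomic identity gives $\sum_i p_i=\sum_j q_j$, so writing $\{y\}=y-\lfloor y\rfloor$ the linear $x$-terms in
$$L(x):=\sum_{i=1}^r\{p_ix\}+\sum_{j=1}^s\{-q_jx\}$$
cancel, leaving $L(x)=-\sum_i\lfloor p_ix\rfloor-\sum_j\lfloor-q_jx\rfloor\in\bbbz$. As a sum of non-negative reals, $L$ is non-negative, and $L(x)=0$ forces every $p_ix$ and $q_jx$ to lie in $\bbbz$; the hypothesis $\gcd(p_1,\ldots,q_s)=1$ then upgrades this to $x\in\bbbz$. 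Hence $L(x)\ge 1$ for $x\notin\bbbz$, giving $\lambda\ge 1$.

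For the denominator bound, Theorem \ref{main} immediately supplies that $q^{\min(r,s)-1}H_q=\pm(|\overline{V_\lambda}(\bbbf_q)|-P_{rs}(q))$ is a rational integer, so $q^{\min(r,s)-\lambda}H_q\in\bbbz[1/p]$ and only $p$-integrality remains. For this, using the rewriting in Theorem \ref{rewriteZ} and Stickelberger (Theorem \ref{stickelberger}), for a prime $\P$ of $\bbbz[\zeta_{\q},\zeta_p]$ above $p$ (with $v_\P(q)=f(p-1)$) I would compute
$$v_\P\!\Bigl(\prod_{i=1}^r g(p_im)\prod_{j=1}^s g(-q_jm)\Bigr) = (p-1)\sum_{k=1}^f L\!\Bigl(\frac{p^km}{\q}\Bigr) \ge (p-1)f\lambda$$
for $m\not\equiv 0\pmod{\q}$ (each $p^km/\q\notin\bbbz$ since $\gcd(p,\q)=1$). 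The prefactor $q^{s(m)-s(0)}$ contributes $(p-1)f(s(m)-s(0))$; using $s(0)=\min(r,s)$ (since $X=1$ is a simple root of each $X^{p_i}-1$ and each $X^{q_j}-1$, hence has multiplicity $\min(r,s)$ in $D(X)$) and $s(m)\ge 0$, each non-zero summand thus has $\P$-valuation at least $(p-1)f(\lambda-\min(r,s))$. The $m=0$ summand has valuation $0$, which is also at least $(p-1)f(\lambda-\min(r,s))$ because the boundary calculations $L(0^+)=s$ and $L(1^-)=r$ force $\lambda\le\min(r,s)$. Since $1/(1-q)$ is a $\P$-unit, I conclude $v_\P(q^{\min(r,s)-\lambda}H_q)\ge 0$ for every $\P\mid p$, which delivers $p$-integrality and completes the proof.

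The main obstacle will be coordinating the three ingredients ($L$-values via Stickelberger, multiplicities $s(m)$, and the explicit power of $q$): once one pins down $s(0)=\min(r,s)$ and the boundary bound $\lambda\le\min(r,s)$, everything slots together, but the bookkeeping between $v_p$, $v_\p$, and $v_\P$ (in particular the ramification factor $p-1$) has to be handled with care.
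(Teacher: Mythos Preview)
Your proof is correct and follows the same two-step strategy as the paper: Theorem~\ref{main} shows $H_q\in\bbbz[1/p]$, and then Stickelberger (Theorem~\ref{stickelberger}) controls the $p$-adic valuation of the rewritten sum from Theorem~\ref{rewriteZ} term by term. In fact your write-up is more complete than the paper's own proof, which omits the explicit verification of $\lambda\ge1$, the contribution of the prefactor $q^{s(m)-s(0)}$ (with $s(0)=\min(r,s)$), and the inequality $\lambda\le\min(r,s)$ needed to handle the $m=0$ summand.
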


In most cases it turns out that $\lambda-\min(r,s)$ is the exact $p$-adic
valuation of $H_q$. but we have not tried to prove this.
\medskip

\begin{proof}{}From Theorem \ref{main} it follows that the values of $H_q$ have a
denominator dividing $p^{\min(r,s)-1}$. To determine the $p$-adic valuation of $H_q$
we use Theorem \ref{stickelberger} and the notation therein.
To be more precise, we determine the $\p$-adic valuation of each coefficient
$g(p_1m)\cdots g(p_rm)g(-q_1m)\cdots g(-q_sm)$. Because $p_1+\cdots p_r-q_1-\cdots-q_s=0$
each such coefficient is in $\bbbq(\zeta_{q-1})$. According to Theorem \ref{stickelberger} its
$\p$-adic valuation is given by
$$\sum_{i=1}^f \left\{{p_1m\over q-1}\right\}+\cdots+\left\{{p_rm\over q-1}\right\}
+\left\{{-q_1m\over q-1}\right\}+\cdots+\left\{{-q_sm\over q-1}\right\}\;.$$
This is bounded below by $f\lambda$. Hence the $\p$-adic valuation of $H_q$ is bounded
below by $f\lambda$, and since the values of $H_q$ are in $\bbbq$ our theorem follows.
\end{proof}
\medskip

The main purpose of this paper is to show that the Katz hypergeometric function occurs
naturally in the point counting numbers of the completion and normalization
of the variety $V_{\lambda}$, which is defined by the points of $\bbbp^{r+s-1}$ satisfying
the equations

$$x_1+x_2+\cdots+x_r-y_1-\cdots-y_s=0,\qquad \lambda x_1^{p_1}\cdots x_r^{p_r}=y_1^{q_1}\cdots y_s^{q_s}$$
with $\lambda\in\fqstar$.

We have the following preliminary result.

\begin{proposition}\label{affinepointcount}
Assume that $\gcd(p_1,\ldots,p_r,q_1,\ldots,q_s)=1$.
Let $V_{\lambda}(\fqstar)$ be the set of points on
$V_{\lambda}$ with coordinates in $\fqstar$. Then
$$|V_{\lambda}(\fqstar)|={1\over q}(q-1)^{r+s-2}+
{1\over q(q-1)}\sum_{m=0}^{q-2}g(\v pm,-\v qm)\omega(\epsilon\lambda)^m\;,$$
where 
$$g(\v pm,-\v qm)=g(p_1m)\cdots g(p_rm)g(-q_1m)\cdots g(-q_sm)
\ {\rm and}\ \epsilon=(-1)^{\sum_iq_i}\;.$$
\end{proposition}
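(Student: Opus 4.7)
The plan is to expand the two defining conditions of $V_{\lambda}$ using character orthogonality. Since every coordinate of a point of $V_{\lambda}(\fqstar)$ lies in $\fqstar$, each projective point has exactly $q-1$ affine representatives, so $|V_{\lambda}(\fqstar)|=N/(q-1)$, where $N$ is the number of affine solutions $(\v x,\v y)\in(\fqstar)^{r+s}$ to both equations. For the linear equation I use
$$\mathbf{1}[x_1+\cdots+x_r-y_1-\cdots-y_s=0]=\frac{1}{q}\sum_{v\in\bbbf_q}\psi_q(v(x_1+\cdots+x_r-y_1-\cdots-y_s)),$$
and for the monomial equation $\lambda\v x^{\v p}\v y^{-\v q}=1$ I use
$$\mathbf{1}[\lambda\v x^{\v p}\v y^{-\v q}=1]=\frac{1}{q-1}\sum_{m=0}^{q-2}\omega(\lambda)^m\omega(\v x)^{\v pm}\omega(\v y)^{-\v qm}.$$
Substituting both and factoring the resulting $(\v x,\v y)$-sum as a product of one-variable sums yields
$$N=\frac{1}{q(q-1)}\sum_{v,m}\omega(\lambda)^m\prod_{i=1}^r\Big(\sum_{x_i\in\fqstar}\psi_q(vx_i)\omega(x_i)^{p_im}\Big)\prod_{j=1}^s\Big(\sum_{y_j\in\fqstar}\psi_q(-vy_j)\omega(y_j)^{-q_jm}\Big).$$

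Next, I split the $v$-sum into $v=0$ and $v\in\fqstar$. For $v=0$ each $x_i$-factor reduces to $(q-1)\delta(p_im)$ and each $y_j$-factor to $(q-1)\delta(q_jm)$ by orthogonality of multiplicative characters. The hypothesis $\gcd(p_1,\ldots,p_r,q_1,\ldots,q_s)=1$ then forces the simultaneous congruences $p_im\equiv q_jm\equiv0\pmod{\q}$ to hold only at $m=0$, giving a $v=0$ contribution of $(q-1)^{r+s-1}/q$ to $N$. For $v\in\fqstar$ I substitute $u=vx_i$ (respectively $u=-vy_j$) in each inner sum, obtaining $\omega(v)^{-p_im}g(p_im)$ (respectively $\omega(-v)^{q_jm}g(-q_jm)$) by the definition of the Gauss sum. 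Using $\omega(-v)^{q_jm}=\omega(-1)^{q_jm}\omega(v)^{q_jm}$, the accumulated $v$-factor is
$$\omega(v)^{-(\sum_ip_i)m}\omega(v)^{(\sum_jq_j)m}\omega(-1)^{(\sum_jq_j)m}=\omega(-1)^{(\sum_jq_j)m}=\omega(\epsilon)^m,$$
where the $\omega(v)$-factors cancel precisely because $\sum_ip_i=\sum_jq_j$. This cancellation is the point of the argument: the remaining summand is independent of $v$, so the $v$-sum over $\fqstar$ merely contributes a factor $q-1$.

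Assembling the two contributions gives
$$N=\frac{(q-1)^{r+s-1}}{q}+\frac{1}{q}\sum_{m=0}^{q-2}g(\v pm,-\v qm)\,\omega(\epsilon\lambda)^m,$$
and dividing by $q-1$ yields the claimed formula. No step is a serious obstacle; the only subtlety is the sign bookkeeping that isolates $\epsilon=(-1)^{\sum_jq_j}$, which depends essentially on the balancedness identity $\sum p_i=\sum q_j$ that is automatic since $\boldalpha,\boldbeta$ are defined over $\bbbq$. One could alternatively dehomogenize by setting $x_1=1$ and apply Lemma~\ref{gaussproduct} directly with $\v a=(p_2,\ldots,p_r,-q_1,\ldots,-q_s)$, but the direct argument above makes the role of the $\gcd$ hypothesis most transparent.
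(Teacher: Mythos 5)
Your proof is correct and is essentially the same computation the paper performs: both expand the linear condition with $\psi_q$, the monomial condition with multiplicative characters (which is the content of the Fourier inversion plus Lemma~\ref{gaussproduct} in the paper's version), and use $\gcd(p_1,\ldots,q_s)=1$ together with $\sum p_i=\sum q_j$ to isolate the $m=0$ and $v=0$ terms. The only organizational differences are cosmetic — you count affine points and divide by $q-1$ where the paper dehomogenizes, and you inline the one-variable Gauss-sum evaluation instead of invoking Lemma~\ref{gaussproduct}.
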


\begin{proof}{}We will use Lemma \ref{gaussproduct}. Let us take $n+1=r+s$ and write
$\v a=(a_1,a_2,\ldots,a_{n+1})=(p_1,\ldots,p_r,-q_1,\ldots,-q_s)$. In particular $n=r+s-1$.
We introduce the new variables $x_{r+j}=-y_j$ for $j=1,\ldots,s$. The equations for
$V_{\lambda}$ now read
$$x_1+x_2+\cdots+x_{n+1}=0,\qquad \lambda x_1^{a_1}\cdots x_{n+1}^{a_{n+1}}=\epsilon\;.$$
In vector notation the latter equation reads $\lambda \v x^{\v a}=\epsilon$. We dehomogenize
by setting $x_{n+1}=1$. Then
$$q|V_{\lambda}(\fqstar)|
=\sum_{v\in\bbbf_q,\v x\in(\fqstar)^n,\lambda\v x^{\v a}=\epsilon}\psi_q(v(1+x_1+\cdots+x_n))\;.$$
This is a function of $\lambda\in\fqstar$.
Let $\sum_{m=0}^{q-2}c_m\omega(\lambda)^m$ be its Fourier series.
The coefficient $c_m$ is given by
$$c_m={1\over q-1}\sum_{v\in\bbbf_q,\lambda,x_i\in\fqstar,\lambda
\v x^{\v a}=\epsilon}\psi_q(v(1+x_1+\cdots+x_n))
\ \omega(\lambda)^{-m}\;.$$
Substitute $\lambda=\epsilon\v x^{-\v a}$ to get
$$c_m={1\over q-1}\sum_{v\in\bbbf_q,x_i\in\fqstar}\psi_q(v(1+x_1+\cdots+x_n))
\ \omega(\epsilon\v x^{\v a})^m\;.$$
Then Lemma \ref{gaussproduct}, using $\gcd(a_1,\ldots,a_n)=1$, yields
$$c_m=(q-1)^{r+s-2}\delta(m)+{1\over q-1}\prod_{i=1}^{r+s} g(a_im)\omega(\epsilon)^m\;.$$
Putting everything together and dividing by $q$ gives us
$$|V_{\lambda}(\fqstar)|={1\over q}(q-1)^{r+s-2}+{1\over q(q-1)}
\sum_{m=0}^{q-2}g(a_1m)\cdots g(a_{r+s}m)\omega(\epsilon\lambda)^m\;,$$
which yields the desired statement.
\end{proof}
\medskip

Notice that most terms in the summation in Proposition \ref{affinepointcount} and
in the summation of Theorem \ref{rewriteZ} agree except for a few, which
differ by a factor which is a power of $q$. The final goal of
this paper is to show that this difference is caused by the difference between $V_{\lambda}$ and
its completion. In others words, the extra factors $q$ arise from addition of
the components of the completion of $V_{\lambda}$.
In the next section we shall elaborate on this idea and prove our main Theorem \ref{main}.

\section{Proof of the main theorem}\label{completion}

Let notation be as in the introduction. In order to complete the variety $V_{\lambda}$
given by equations (\ref{Vequation}), we need to fill in the points where one or more of the coordinates
are zero. In this section we shall do that quite explicitly, but first like to motivate
our approach by linking it to the theory of toric varieties. Our main reference
is the book by Cox, Little, Schenck, \cite{cls}. First of all, consider the variety $X_{\lambda}$
in $\bbbp^{r+s-1}$ defined by the projective equation
$\lambda\prod_{i=1}^r x_i^{p_i}=\prod_{j=1}^sy_j^{q_j}$. Take any solution $(a_1,\ldots,a_r;
b_1,\ldots,b_s)$ with non-zero coordinates. Then $x_i/a_i,y_i/b_i$ are the coordinates of
points on the projective variety $X_1$. The variety $X_1$ is a toric variety, the
points with non-zero coordinates form an open subvariety which is a torus (of rank $r+s-2$)
and the action of this torus can be extended to all points of $X_1$, see \cite[Def 3.1.1]{cls}.

To a toric variety one can associate a so-called fan (see \cite[Def 3.2.1]{cls}),
which we shall denote by $\Sigma$. To compute it
we use \cite[Thm 3.2.6]{cls} which gives a one-to-one correspondence between limits of group homomorphisms $\bbbc^{\times}\to X_1$ and the cones of a fan. Concretely, choose
$(\xi_1,\ldots,\xi_r;\eta_1,\ldots,\eta_s)\in\bbbr^{r+s}$ such that $\sum_{i=1}^rp_i\xi_i
=\sum_{j=1}^sq_j\eta_j$. Call this hyperplane $H$. We consider $H$ modulo shifts
with the vector $(1,1,\ldots,1)$. Any vector in $H/(1,1,\ldots,1)$ has a representing
element with $\min_{i,j}(\xi_i,\eta_j)=0$. Consider the embedding $\bbbr_{>0}^{\times}\to X_1$
given by $t\mapsto (t^{\xi_1},\ldots,t^{\xi_r};t^{\eta_1},\ldots,t^{\eta_s})$. Now let
$t\downarrow 0$. The limit consists of vectors with ones and zeros. 
A component is zero if and only if the corresponding $\xi_i$ or $\eta_j$ is positive. 
Two points in $H/(1,\ldots,1)$ lie in the same cone of $\Sigma$ if and only if their limits
are the same. Therefore, an open cone in $\Sigma$
is characterised by the index sets $S_x,S_y$ defined by $i\in S_x\iff \xi_i>0$
and $j\in S_y\iff \eta_j>0$.

It turns out that usually the toric variety $X_1$ corresponding to $\Sigma$
is highly singular.
We like to construct a partial blow up of $X_1$.
To do this one can construct a refinement $\Sigma'$ of $\Sigma$
and consider the toric variety corresponding to $\Sigma'$ (which may not be projective
ay more). The refinement we choose is a triangulation of $\Sigma$, which need not completely remove
the singularities from $X_1$, but it will be enough for our purposes.
Denote the blown up variety by $\hat{X}_1$.
Completeness of our blowup is garantueed because $\Sigma$ is a fan whose support is
all of $H/(1,\ldots,1)$, see \cite[Thm 3.1.19]{cls}.

Denote the corresponding completion of the translated variety $X_{\lambda}$
by $\hat{X}_{\lambda}$.
The variety $V_{\lambda}$ is the intersection of $\sum_{i=1}^rx_i=\sum_{j=1}^sy_j$ with the
affine part of $X_{\lambda}$ consisting of all points with non-zero coordinates.

\begin{definition}\label{completiondefinition}
Let notations be as above.
We define $\overline{V_{\lambda}}$ as the completion of $V_{\lambda}$ in $\hat{X}_{\lambda}$.
\end{definition}

We have seen above that the fan $\Sigma$ of $X_1$ has its support in $H/(1,\ldots,1)$.
We choose representatives of $H/(1,\ldots,1)$ in the set ${\cal H}$ defined by 
$$\xi_i,\eta_j\in\bbbr:\ \sum_{i=1}^r p_i\xi_i=\sum_{j=1}^sq_j\eta_j,
\ {\rm and}\ \min_{i,j}(\xi_i,\eta_j)=0.$$
The (closed) cones of $\Sigma$ are then characterized by sets $S_x\subset\{1,\ldots,r\}$ and
$S_y\subset\{1,\ldots,s\}$ via 
$i\not\in S_x\Rightarrow \xi_i=0$ and $j\not\in S_y\Rightarrow \eta_j=0$.

We now proceed to choose a refinement of $\Sigma$. Let $\overline{\cal H}$ be
the convex closure of ${\cal H}$. 
Note that it is the $\bbbr_{\ge0}$ span of
the vectors $\v v_{ij},\ i=1,\ldots,r;j=1,\ldots,s$ defined by 
$$\v v_{ij}=(0,\ldots,q_j,\ldots,0;0,\ldots,p_i,\ldots,0)\in\bbbr^{r+s},$$
where $q_j$ is at the $i$-th coordinate position of $\bbbr^r$ and
$p_i$ at the $j$-th position of $\bbbr^s$.

Let $\Delta^{r-1}$ be the standard simplex defined by $p_1\xi_1+\cdots+p_r\xi_r=1$
and $\xi_i\ge0$ and $\Delta^{s-1}$ the standard simplex defined by
$q_1\eta_1+\cdots+q_s\eta_s=1,\ \eta_j\ge0$. Then we
see that the points corresponding to the vectors $\v v_{ij}/(p_iq_j)$ are
vertices of a polytope isomorphic to $\Delta^{r-1}\times\Delta^{s-1}$. Thus
$\overline{\cal H}$ is a cone over the $(r+s-2)$-dimensional product simplex
$\Delta^{r-1}\times\Delta^{s-1}$.

A {\it simplicial subcone} of $\overline{\cal H}$ is the $\bbbr_{\ge0}$-span of any set of
vectors $\v v_{i_1j_1},\ldots,\v v_{i_{r+s-1}j_{r+s-1}}$ such that
$\{i_1,\ldots,i_{r+s-1}\}=\{1,\ldots,r\}$ and $\{j_1,\ldots,j_{r+s-1}\}=
\{1,\ldots,s\}$. By abuse of language we will call the corresponding set of
index pairs 
$$(i_1,j_1),\ldots,(i_{r+s-1},j_{r+s-1})$$
a {\it simplex}. We now triangulate $\overline{\cal H}$. That is, we write
$\overline{\cal H}$ as a union of simplicial cones whose interiors do not intersect. To do
this we take a triangulation of $\Delta^{r-1}\times\Delta^{s-1}$ and consider the
cones over the simplices of this triangulation. The triangulation we choose
is the so-called {\it staircase triangulation}, see \cite[Lemma 6.2.8]{lrs}.

\begin{proposition}A triangulation of $\overline{\cal H}$ is given by the set of cones over
all simplices $(i_1,j_1),\ldots,(i_{r+s-1},j_{r+s-1})$ with $i_1\le i_2\le\cdots\le i_{r+s-1}$
and $j_1\le j_2\le\cdots\le j_{r+s-1}$.
\end{proposition}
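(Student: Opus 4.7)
The plan is to establish the triangulation via the classical merging construction, which assigns to each point $(\v\xi,\v\eta)\in\overline{\cal H}$ a weighted staircase path. Given such a point, form the partial sums $A_i=\sum_{i'\le i}p_{i'}\xi_{i'}$ and $B_j=\sum_{j'\le j}q_{j'}\eta_{j'}$, so that $A_0=B_0=0$ and $A_r=B_s=T$ with $T\ge0$. Merge these two non-decreasing sequences into a single non-decreasing sequence $0=t_0\le t_1\le\cdots\le t_{r+s}=T$. Each non-degenerate subinterval $(t_{k-1},t_k)$ lies in a unique interval $(A_{i_k-1},A_{i_k})$ and a unique $(B_{j_k-1},B_{j_k})$, assigning it the pair $(i_k,j_k)$; by construction both $(i_k)$ and $(j_k)$ are non-decreasing in $k$.

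First I would verify coverage. Setting $c_k=(t_k-t_{k-1})/(p_{i_k}q_{j_k})$ and computing the $i$-th $\bbbr^r$-coordinate of $\sum_k c_k\v v_{i_kj_k}$ yields $\sum_{k:\,i_k=i}q_{j_k}c_k=(A_i-A_{i-1})/p_i=\xi_i$, and symmetrically the $\eta_j$ come out right, so every $(\v\xi,\v\eta)\in\overline{\cal H}$ lies in some staircase cone. Second, I would prove disjointness of interiors: for a generic point, with all $A_i$ and $B_j$ pairwise distinct apart from the forced equalities $A_0=B_0$ and $A_r=B_s$, the merged ordering is unique, all $c_k$ are strictly positive, and the resulting sequence has length exactly $r+s-1$ with $\{i_1,\ldots,i_{r+s-1}\}=\{1,\ldots,r\}$ and $\{j_1,\ldots,j_{r+s-1}\}=\{1,\ldots,s\}$. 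Each such generic point therefore lies in the interior of exactly one staircase cone. Since $\overline{\cal H}$ is $(r+s-1)$-dimensional and each staircase cone has $r+s-1$ generators and meets the relative interior of $\overline{\cal H}$, the generators $\v v_{i_kj_k}$ must be linearly independent, confirming that each such cone is genuinely simplicial.

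The last step, and the main obstacle, is the face-intersection condition that promotes the covering to a bona fide triangulation: if a point lies in two staircase cones corresponding to paths $\pi\ne\pi'$, then it fails to be generic, and some $A_i=B_j$; one then checks that the point admits several valid mergings, all of which agree on the index pairs common to $\pi$ and $\pi'$. The intersection of the two cones is then precisely the cone spanned by those common $\v v_{ij}$, a face of each. Carrying out this boundary analysis is the delicate combinatorial part; it amounts to checking that ties in the merged sequence correspond exactly to collapsing single edges of a staircase path into a vertex shared by neighbouring paths in the resulting simplicial complex.
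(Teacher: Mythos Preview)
The paper does not prove this proposition at all: immediately before stating it the authors identify the construction as the staircase triangulation and simply refer the reader to \cite[Lemma~6.2.8]{lrs}. So there is no argument in the paper to compare yours against.

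Your merging argument is the standard proof of the staircase triangulation of $\Delta^{r-1}\times\Delta^{s-1}$, and the coverage and generic-uniqueness steps you wrote out are correct as stated. The face-intersection step you flag as the main obstacle is indeed the part that needs care, but your description of how it goes is accurate: a non-generic point has some coincidence $A_i=B_j$ among the partial sums, and the ambiguity in merging at such a tie corresponds exactly to replacing a diagonal step through $(i,j)$ by one of the two $L$-shaped detours, so any two staircase paths containing the point share precisely the $\v v_{ij}$ indexed by their common lattice points, and the intersection of the two cones is the face spanned by those vectors. A slicker way to finish, if you prefer, is to note that the staircase triangulation is regular (it is induced by the height function $h(\v v_{ij})=ij$, or equivalently arises as the Cayley trick applied to a product), which forces the face-intersection property automatically. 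Either route is the content of the lemma the paper cites.
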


Thus simplices of the staircase triangulation are in one-to-one correspondence with
lattice paths in the plane that go from $(1,1)$ to $(r,s)$ by making steps of size $1$ in north
or east direction.

To illustrate this proposition we give the staircase triangulation for $\Delta^2\times \Delta^1$,
which is a triangular prism. In this case $r=3$, $s=2$. The sequences $(i_1,j_1),\ldots,
(i_4,j_4)$ are depicted as dots in the following pictures,

\centerline{\includegraphics[height=2cm]{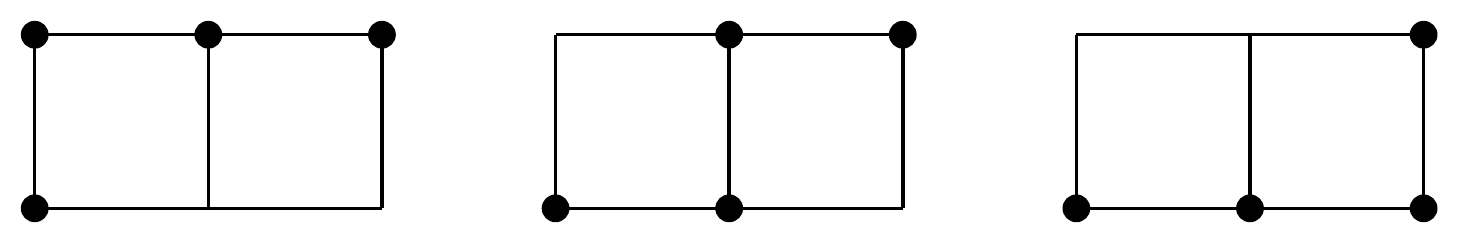}}

There are three possible pictures which corresponds to the fact that a triangulation of
a triangular prism consists of three simplices.

Any subset of a simplex is called a {\it cell}. The cone spanned by the corresponding vectors
$\v v_{ij}$ is called a {\it cellular cone}. From now on we restrict to cells which are subset
of the simplices belonging to the staircase triangulation. They are characterized by
sequences $(i_1,j_1),\ldots,(i_l,j_l)$ with $i_1\le\cdots\le i_l$ and $j_1\le\ldots\le j_l$, but
$\{i_1,\ldots,i_l\}$ need not equal $\{1,2,\ldots,r\}$ anymore and similarly for $\{j_1,\ldots,j_l\}$.
The corresponding cellular cones have dimension $l$ and can be pictured in a similar way as above.
To any cell $C$ we can associate the supports $S_x(C)=\{i_1,\ldots,i_l\}$ and
$S_y(C)=\{j_1,\ldots,j_l\}$. Let $S(C)$ be the disjoint union of $S_x$ and $S_y$.
Then clearly, $l+1\le |S(C)|\le\min(r+s,2l)$. We say that the support of $C$ is {\it maximal} if
$|S(C)|=r+s$. The cellular cone corresponding to a maximal cel is not contained in ${\cal H}$,
the cone corresponding to a non-maximal cel is contained in ${\cal H}$.
Furthermore, if $S(C)$ is not maximal, then  $|S(C)|\le r+s-2$, since points
with only one non-zero coordinate cannot exist in ${\cal H}$ because of the equation
$\sum_ip_i\xi_1=\sum_jq_j\eta_j$. 

\begin{definition}
The fan $\Sigma'$ is defined by all
cellular cones whose support is not maximal.
\end{definition}

Choose a cell $C$ given by $(i_1,j_1),\ldots,(i_l,j_l)$. To it we shall associate a component,
denoted by $W_{C,\lambda}$, of the completion of $V_{\lambda}$.
Choose an index $\sigma$ not in $S_x$, or if that is not possible, $\sigma\not\in S_y$.
Assume the first case happens. Then set $x_{\sigma}=1$ (dehomogenization). However,
for the sake of elegance, we continue to write $x_{\sigma}$. 

We replace the coordinates $x_i,i\in S_x,y_j,j\in S_y$ by monomials
in $|S|$ new variables, namely, $u_1,\ldots,u_l, w_1,\ldots,w_{|S|-l-1},z$. It could happen that
there are no variables $w_i$, namely if $|S|$ has the minimal value $l+1$. We begin by choosing
$l$ lattice points $\v u_1,\ldots,\v u_l$ in the $\bbbr_{\ge0}$-span of
$\v v_{i_1j_1},\ldots,\v v_{i_lj_l}$ such that
they form a basis of the lattice points in the $\bbbr$-span of
$\v v_{i_1j_1},\ldots,\v v_{i_lj_l}$. 
We next extend this basis to a basis of the lattice points in the space
$p_1\xi_1+\cdots+p_r\xi_r=q_1\eta_1+\cdots+q_s\eta_s$
with support in $S$. That is, for any such lattice point we have $\xi_i=0$
if $i\not\in S_x$ and $y_j=0$ if
$j\not\in S_y$. The additional vectors are denoted by $\v w_1,\ldots,\v w_{|S|-l-1}$.
Finally we extend this
basis to a basis of all lattice points with support in $S$ by adding a suitable vector $\v z$.

Let us denote the $m$-th component of $\v u_1$ by $u_{1i_1}$, and similarly for the other indices.
Now replace $x_{i_1}$ in equation (\ref{Vequation}) by 
$$u_1^{u_{1i_1}}u_2^{u_{2i_1}}\cdots u_l^{u_{li_1}}w_1^{w_{1i_1}}\cdots z^{z_{1i_1}}$$
and similarly for the other variables $x_m,m\in S_x,y_m,m\in S_y$. It is important to notice that the
minimum of the $i_1$-th components of $\v u_1,\ldots,\v u_l$ is positive. The component
$W_{C,\lambda}$ to
be added consists of  the points with $u_1=\cdots=u_l=0$ and all other coordinates non-zero.
Setting $u_1=u_2=\cdots=u_l=0$ in our equation will render $x_i,i\in S_x,y_j\in S_y$ zero. Furthermore, since the $\v u_i$ and $\v w_i$ all satisfy the
equation $p_1\xi_1+\cdots+p_r\xi_r=q_1\eta_1+\cdots+q_s\eta_s$ the variables $u_i,w_i$ will
be absent from the equation $\lambda x_1^{p_1}\cdots x_r^{p_r}=y_1^{q_1}\cdots y_s^{q_s}$
after we made the substitution. The variable $z$ will occur in the equation with exponent $\pm a_S$
where $a_S=\gcd(p_{i_1},\ldots,p_{i_l},q_{j_1},\ldots,q_{j_l})$. 

After the variable substitution and setting $u_1=\cdots=u_l=0$ we are left with the equations
\begin{equation}\label{Wequation}
\sum_{i\not\in S_x}x_i=\sum_{j\not\in S_y}y_j,
\quad \lambda z^{a_S}\prod_{i\not\in S_x}x_i^{p_i}=\prod_{j\not\in S_y}y_j^{q_j}.
\end{equation}
Notice that the variables $w_i$ have disappeared from the equations. We choose them arbitrarily
and non-zero. The variables $x_i,i\not\in S_x$ and $y_j,j\not\in S_y$ are also taken
non-zero. So the component $W_{C,\lambda}$ is $\bbbg_m^{|S|-l-1}$ times the affine variety
of points with non-zero coordinates given by the equations (\ref{Wequation}). 

Before giving the point count on $W_{C,\lambda}$ we present an example of the
evaluation of some components. Consider the case $p_1=6,p_2=3,p_3=2,p_4=1,q_1=8,q_2=q_3=4$.
We have $r=4,s=2$ and the equations of $V_{\lambda}$ read
\begin{equation}\label{Vexample}
x_1+x_2+x_3+x_4=y_1+y_2,\quad \lambda x_1^6x_2^3x_3^2x_4=y_1^8y_2^4.
\end{equation}
We would like to construct the components $W_{C,\lambda}$ for the following
cells $C$:

\centerline{\includegraphics[height=1.5cm]{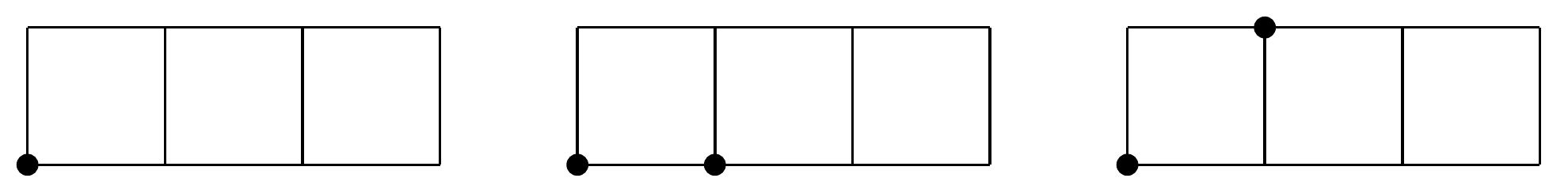}}

In all three examples we shall set $x_4=1$.
First consider the cell given by $(1,1)$. The relevant variables that become $0$ are $x_1,y_1$.
Choose $\v u_1=(4,3)$ (actually $\v u_1\in\bbbr^6$, but we write down only the non-zero coordinates
at the positions corresponding to $x_1,y_1$). We choose $\v z=(1,1)$.
This gives rise to the substitution $x_1=u_1^4z,y_1=u_1^3z$. After this substitution equations 
(\ref{Vexample}) become
$$u_1^4z+x_2+x_3+1=u_1^3z+y_2,\quad \lambda x_2^3x_3^2=z^2y_2^4.$$
After setting $u_1=0$ we get
$$x_2+x_3+1=y_2,\qquad \lambda x_2^3x_3^2=z^2y_2^4.$$

Now consider the cell given by $(1,1),(2,1)$. The relevant variables that become $0$ are $x_1,x_2,y_1$.
For $\v u_1,\v u_2$ we have to find a lattice basis in the cone spanned by $(4,0,3),(0,8,3)$.
Choose $\v u_1=(4,0,3),\v u_2=(3,2,3)$. Finally we choose $\v z=(0,3,1)$. Our substitution becomes
$x_1=u_1^4u_2^3,x_2=u_2^2z^3,y_1=u_1^3u_2^3z$. Equations (\ref{Vexample}) become
$$u_1^4u_2^3+u_2^2z^3+x_3+1=u_1^3u_2^3z,\quad \lambda zx_3^2=y_2^4.$$
Setting $u_1=u_2=0$ leaves us with
$$x_3+1,\qquad \lambda zx_3^2=y_2^4.$$
Now consider the cell given by $(1,1),(2,2)$. That means $x_1,x_2,y_1,y_2$ are the relevant variables
that go to $0$. We choose $\v u_1=(4,0,3,0),\v u_2=(0,4,0,3)$. Since $|S|-l-1=1$ in this case we can choose
a vector $\v w_1$, say $\v w_1=(1,-2,1,-2)$. Note that all three vectors satisfy
$6t_1+3t_2=8t_3+4t_4$. Finally we take $\v z=(0,1,0,1)$. This yields the substitution
$$x_1=u_1^4w_1,\ x_2=u_2^4w_1^{-2}z,\ y_1=u_1^3w_1,\ y_2=u_2^3w_1^{-2}z$$
and equations (\ref{Vexample}) become
$$u_1^4w_1+u_2^4w_1^{-2}z+x_3+1=u_1^3w_1+u_2^3w_1^{-2}z,\quad
\lambda x_3^2=z.$$
Setting $u_1=u_2=0$ yields $x_3+1=0,\lambda x_3^2=z$. We see that $w_1$ has disappeared from the equations.

\begin{proposition}
The points of $W_{C,\lambda}$ are non-singular.
\end{proposition}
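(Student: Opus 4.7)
The strategy is straightforward: show that the Jacobian of the two defining equations of $W_{C,\lambda}$ has rank $2$ at every point, using the fact that the variable $z$ enters only the monomial equation.

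First I would reduce the problem. The substitution eliminates the variables $u_1,\ldots,u_l$ and $w_1,\ldots,w_{|S|-l-1}$ from equations (\ref{Wequation}) entirely (the $u_i$'s by being set to $0$, the $w_i$'s because $\v w_i$ lies in the hyperplane $\sum p_i\xi_i=\sum q_j\eta_j$). Since the $w_i$ are free non-zero parameters, $W_{C,\lambda}$ is scheme-theoretically the product of the torus $\bbbg_m^{|S|-l-1}$ in the $w$-variables with the affine scheme $Y$ cut out by (\ref{Wequation}) in the coordinates $(x_i)_{i\not\in S_x},(y_j)_{j\not\in S_y}$ and $z$ (all non-zero, with one variable set to $1$ for dehomogenization). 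Since a product with a smooth variety is smooth iff the other factor is, it suffices to prove that $Y$ is smooth.

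Now I would write down the Jacobian of the two functions
$$F_1=\sum_{i\not\in S_x}x_i-\sum_{j\not\in S_y}y_j,\qquad F_2=\lambda z^{a_S}\prod_{i\not\in S_x}x_i^{p_i}-\prod_{j\not\in S_y}y_j^{q_j}$$
(or the analogous expression if $z$ appears with exponent $-a_S$). The row of $F_1$ has entries $\pm1$ in the $x_i,y_j$ slots and $0$ in the $z$ slot. For the row of $F_2$, at any point of $Y$ the value $K:=\lambda z^{a_S}\prod x_i^{p_i}=\prod y_j^{q_j}$ is non-zero, since all $x_i,y_j,z$ and $\lambda$ are non-zero; so
$$\frac{\partial F_2}{\partial x_i}=\frac{p_iK}{x_i},\quad \frac{\partial F_2}{\partial y_j}=-\frac{q_jK}{y_j},\quad \frac{\partial F_2}{\partial z}=\frac{a_SK}{z},$$
and in particular the $z$-entry $a_SK/z$ is non-zero.

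This immediately gives rank $2$: any linear combination $\alpha\cdot(\text{row of }F_1)+\beta\cdot(\text{row of }F_2)=0$ forces $\beta=0$ from the $z$-column, and then $\alpha=0$ because at least one entry of the $F_1$ row is non-zero. The only point requiring care is that the $F_1$-row is not identically zero after dehomogenization, i.e.\ that we do not have $|T_x|+|T_y|=1$ with the single variable being the one we set to $1$; but the hypothesis that the support is non-maximal means $|S|\le r+s-2$, so $|T_x|+|T_y|\ge 2$, and one easily checks that at least one undehomogenized $x_i$ or $y_j$ survives. I do not expect any serious obstacle here—the whole content is that the triangulation was chosen precisely so that the new coordinate $z$ produces a transverse intersection with the hyperplane $F_1=0$.
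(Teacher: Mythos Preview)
Your proof is correct and follows essentially the same argument as the paper: both use that $\partial F_1/\partial z=0$ while $\partial F_2/\partial z\ne0$ to rule out linear dependence, and both invoke $|S|\le r+s-2$ to guarantee that, after dehomogenising one variable, at least one remaining $x_i$ or $y_j$ gives a $\pm1$ entry in the $F_1$-row. Your preliminary reduction to the factor $Y$ (via the product decomposition $W_{C,\lambda}\cong \bbbg_m^{|S|-l-1}\times Y$) is a minor streamlining over the paper, which works directly in the full coordinate chart with the $u_i$ set to zero; the substance is identical. (One small quibble: you write $|T_x|+|T_y|$ without defining $T_x,T_y$---presumably the complements of $S_x,S_y$---so just make that explicit.)
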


\begin{proof}{}Let
$V$ be an affine variety given by two equations 
$$F(x_1,\ldots,x_n)=G(x_1,\ldots,x_n)=0\;.$$
Then a point $P\in V$ is
non-singular if and only if the vectors $(F_1(P),\ldots,F_n(P))$ and $(G_1(P),\ldots,G_n(P))$ are dependent. 
Here $F_i,G_i$ denotes differentiation of $F$, resp $G$, with respect to $x_i$. For $F$ and $G$ we take the equations (\ref{Vequation})
with $x_{\sigma}=1$ and where we have made the necessary change of variables in the construction of $W_{C,\lambda}$.
Let us say $F$ comes from the linear equation. Then $F$ has the form $\Sigma+ 1+\sum_{i\not\in S_x\cup\sigma}x_i=
\sum_{j\not\in S_y}y_j$, where $\Sigma$ is the sum of all terms containing powers of the $u_j$.
The polynomial $G$ has the form $\lambda z^a\prod_{i\not\in S_x\cup\sigma}x_i^{p_i}=\prod_{j\not\in S_y}y_j^{q_j}$.
Choose a point $P\in W(C\lambda)$ and consider the derivates of $F,G$ at that point. In particular the derivative of $F$ with respect
to $z$ is $0$, whereas $G_z(P)$ is $az^{a-1}\prod_ix_i^{p_i}\ne0$. If the vectors of derivations were dependent we conclude
that all derivatives of $F$ are zero at $P$. However, since $|S|\le r+s-2$,
there exists $i\not\in S_x\cup\sigma$ or $j\not\in S_y$ and
the derivative of $F$ with respect to the corresponding variable is $\pm1$. So we have a contradiction and $P$ is non-singular.
\end{proof}

\begin{proposition}\label{componentcount}
Let $C$ be a cell as above, $l$ its number of elements and
$S$ its support. Let $W_{C,\lambda}$ be the product of $\bbbg_m^{|S|-l-1}$
and the variety defined by the equations (\ref{Wequation}). Then
$$
|W_{C,\lambda}(\fqstar)|={1\over q}(q-1)^{r+s-l-2}+\\
{(-1)^{|S|}\over q}(q-1)^{|S|-l-1}\sum_{m=0}^{q-2}
\delta(a_Sm)g(\v pm,-\v qm)\omega(\epsilon\lambda)^m,
$$
where
$$g(\v pm,-\v qm)=\prod_{i=1}^rg(p_im)\prod_{j=1}^sg(-q_jm)\ {\rm and}\ \epsilon=(-1)^{\sum_{j=1}^sq_j}.$$
In addition, if we take $C$ empty, then $|S|=l=0$ and the point count coincides with
Proposition \ref{affinepointcount} if we set $a_S=0$.
\end{proposition}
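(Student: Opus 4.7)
The plan is to follow the pattern of Proposition \ref{affinepointcount}. First I peel off the factor $(q-1)^{|S|-l-1}$ contributed by the free $w$-variables, so that it remains to count solutions of (\ref{Wequation}) with $z$, the $x_i$ ($i\notin S_x$) and the $y_j$ ($j\notin S_y$) all in $\fqstar$. Absorbing the signs via $y_j=-x_{r+j}$ and dehomogenizing by $x_\sigma=1$ reduces the system to $1+\sum_i x_i=0$ together with $\lambda z^{a_S}\v x^{\v a}=\epsilon'$, over $n=r+s-|S|-1$ remaining coordinates $x_i$, with $\v a$ collecting the corresponding $p_i$ and $-q_j$ and $\epsilon'=(-1)^{\sum_{j\notin S_y}q_j}$.

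Next I compute this count via its Fourier series in $\omega(\lambda)$, mimicking the proof of Proposition \ref{affinepointcount}: encode the linear equation with $\psi_q$, use the monomial equation to eliminate $\lambda$, and sum over $z\in\fqstar$ to obtain an overall factor $(q-1)\delta(a_Sm)$ in the $m$-th Fourier coefficient. The remaining sum over $v\in\bbbf_q$ and the $n$ coordinates $x_i$ is exactly the sum evaluated by Lemma \ref{gaussproduct}, yielding the two pieces $(q-1)^n\delta(\v am)$ and $\prod_{i=1}^{n+1}g(a_im)$.

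I would then identify these two pieces with the two summands in the claimed formula. On the support $\delta(a_Sm)=1$, the exponents $p_i$ ($i\in S_x$) and $q_j$ ($j\in S_y$) are multiples of $a_S$, so the corresponding Gauss sums equal $g(0)=-1$; reinserting the $\sigma$-th factor via the periodicity of $g(\cdot)$ modulo $q-1$ and the relation $\sum_ip_i=\sum_jq_j$ turns $\prod_{i=1}^{n+1}g(a_im)$ into $(-1)^{|S|}g(\v pm,-\v qm)$. For the $\delta$-piece, the joint condition $\delta(a_Sm)\delta(\v am)=1$ forces $(q-1)\mid p_im$ for all $i$ and $(q-1)\mid q_jm$ for all $j$; combined with $\gcd(p_1,\ldots,q_s)=1$ this forces $m=0$ and collapses the piece to $(q-1)^n$, which together with the $(q-1)^{|S|-l-1}$ prefactor gives the constant $(q-1)^{r+s-l-2}/q$.

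The main obstacle is the sign bookkeeping, in particular matching $\omega(\epsilon')^m$ with $\omega(\epsilon)^m$ on the support $\delta(a_Sm)=1$. Their ratio is $\omega(-1)^{(\sum_{j\in S_y}q_j)m}$; since $a_S$ divides $\sum_{j\in S_y}q_j$ while $(q-1)\mid a_Sm$, this exponent is a multiple of $q-1$ and hence even (for $q$ odd; for $q$ even, $\omega(-1)=1$ trivially), so the ratio equals $1$. Finally, the empty-cell boundary statement follows on setting $a_S=0$: the $z$-variable disappears, the constraint $\delta(a_Sm)$ becomes vacuous, and the formula specializes to Proposition \ref{affinepointcount}.
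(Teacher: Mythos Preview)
Your proposal is correct and follows essentially the same approach as the paper: peel off the $(q-1)^{|S|-l-1}$ torus factor, dehomogenize and relabel to reduce (\ref{Wequation}) to $1+\sum_i x_i=0,\ \lambda z^{a_S}\v x^{\v a}=\epsilon'$, compute the Fourier coefficients in $\omega(\lambda)$ by summing over $z$ to produce the factor $(q-1)\delta(a_Sm)$ and then applying Lemma~\ref{gaussproduct}, and finally identify the two resulting pieces with the two summands in the claimed formula via the divisibility relations among the $p_i,q_j$ and the parity argument for $\epsilon'$ versus $\epsilon$. The only cosmetic difference is that the paper writes $\epsilon'=(-1)^{\sum_{j\in S_y}q_j}$ (a slip; your $\epsilon'=(-1)^{\sum_{j\notin S_y}q_j}$ is the correct value coming from the substitution $y_j=-x_{r+j}$), but both versions lead to the same conclusion $\omega(\epsilon')^m=\omega(\epsilon)^m$ on the support $\delta(a_Sm)=1$.
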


\begin{proof}{}We work with a non-empty cell. First we count the number $N$ of solutions of
equations (\ref{Wequation}) and then multiply the result by $(q-1)^{|S|-l-1}$.
Let us rewrite the equation (\ref{Wequation}) in the more managable form 
$$1+x_1+\cdots+x_n=0,\qquad \epsilon'=\lambda z^{a_S}\prod_{i=1}^nx_i^{a_i}.$$
To arrive at this we set $x_{\sigma}=1$ and replace $-y_j$ by variables $x_m$
and the set $\{a_1,\ldots,a_n\}$ to be the set consisting of $p_i,i\not\in S_x\cup\sigma$
and $-q_j,j\not\in S_y$. Finally, $\epsilon'=(-1)^{\sum_{j\in S_y}q_j}$. Notice
that $n=r+s-|S|-1$.
 
De cardinality $N$ is computed using
$$qN=\sum_{z,x_i\in\fqstar,v\in\bbbf_q,\epsilon'=\lambda z^{a_S}\prod_ix_i^{a_i}}
\psi_q(v(1+\sum_ix_i))\;,$$
where the summation and product with index $i$ are to be taken over $i=1,2,\ldots,n$.
We determine the Fourier coefficient $c_m$ of this expression.
$$c_m={1\over q-1}\sum_{z,x_i,\lambda\in\fqstar,v\in\bbbf_q,\epsilon'=\lambda z^{a_S}\prod_ix_i^{a_i}}
\psi_q(v(1+\sum_ix_i))\ \omega(\lambda)^{-m}\;.$$
Substitute $\lambda=\epsilon' z^{-a_S}\prod_ix_i^{-a_i}$ to get
$$c_m={1\over q-1}\sum_{z,x_i\in\fqstar,v\in\bbbf_q}
\psi_q(v(1+\sum_ix_i))\ \omega (z^{a_S}\prod_ix_i^{a_i})^m\omega(\epsilon')^m\;.$$
Summation over $z$ yields the factor $(q-1)\delta(a_Sm)$. Then, using Lemma
\ref{gaussproduct}, summation over $v$ and the $x_i$ yields
$$c_m=\delta(a_Sm)\left((q-1)^{r+s-|S|-1}\delta(am)
+{1\over q-1}g(-\sum_ia_im)\prod_{i}g(a_im)\right)\omega(\epsilon')^m\;,$$
where $a=\gcd_{i=1,\ldots,n}(a_i)$.

We rewrite this expression for $c_m$ in terms of our original data.
First of all $a$ is the gcd of all $p_i,q_j$ with $i\not\in S_x\cup\sigma$ and
$j\not\in S_y$. Note that any divisor $d$ of both $a$ and $a_S$ divides all
$p_i,q_j$ except possibly $p_{\sigma}$. Because the sums of all $p_i$ 
equals the sum of all $q_j$, $d$ also divides $p_{\sigma}$. Since
the gcd of all $p_i,q_j$ is $1$ we conclude that $\gcd(a,a_S)=1$, hence
$\delta(am)\delta(a_Sm)=\delta(m)$. 

Secondly, if $a_Sm\is0\mod{\q}$ then
$p_im\is0\mod{\q}$, hence $g(p_im)=-1$, for all $i\in S_x\cup\sigma$ and similarly
$g(-q_jm)=-1$ for all $j\in S_y$. Furthermore,
$-\sum_ia_im\is-\sum_{i=1}^rp_im+\sum_{j=1}^sq_jm+p_{\sigma}m\mod{\q}\is p_{\sigma}m\mod{\q}$. Hence
$$\delta(a_Sm)g(-\sum_ia_im)\prod_ig(a_im)=(-1)^{|S|}
\delta(a_Sm)\prod_{i=1}^rg(p_im)\prod_{j=1}^sg(-q_jm)\;.$$

Thirdly, if $\delta(a_Sm)=1$, then $m\sum_{j\in S_y}q_j$ is divisible by $a_Sm$, hence divisible
by $q-1$. If $q$ is odd this means that $m\sum_{j\not\in S_y}q_j\is m\sum_{j=1}^sq_j\mod{2}$.
hence $(\epsilon')^m=\epsilon^m$. If $q$ is even we work in characteristic $2$ and thus
$1=-1$. We obtain
$$c_m=(q-1)^{r+s-|S|-1}\delta(m)+
(-1)^{|S|}\delta(a_Sm)\prod_{i=1}^rg(p_im)\prod_{j=1}^sg(-q_jm)\omega(\epsilon)^m.$$
Division by $q$, multiplication by $(q-1)^{|S|-l-1}\omega(\lambda)^m$,
and summation over $m$ yields the desired result.
\end{proof}
\medskip

Denote by ${\cal T}_{rs}$ the set of all cells of the staircase triangulation of
$\Delta^{r-1}\times \Delta^{s-1}$.
Recall that any such cell $C$ of ${\cal T}_{rs}$ is
given by a sequence $(i_1,j_1),\ldots,(i_l,j_l)$ of distinct pairs such
that $i_{m+1}-i_m,j_{m+1}-j_m\ge0$ for $m=1,\ldots,l-1$. The number $l=l(C)$ is called the
length of the cell. As before, the support $S(C)$ is the disjoint union of the index sets
$\{i_1,\ldots,i_l\},\{j_1,\ldots,j_l\}$. Notice that the difference
$|S(C)|-l(C)-1$ is equal to the number of indices $m$ such that $i_{m+1}-i_m,j_{m+1}-j_m>0$.
We note specifically that the sequence may be empty, i.e. $l=0$, in which case we speak
of the empty cell.

\begin{proposition}\label{summationterm}
With the above notation we have
$$\sum_C (q-1)^{|S(C)|-l(C)}(-1)^{|S(C)|}=q^{\min(r,s)}\;,$$
where the summation extends over all cells $C$ of ${\cal T}_{rs}$, including the
empty one.
\end{proposition}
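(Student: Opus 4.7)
The plan is to reduce the weighted alternating sum to a two-variable generating-function computation. First, I would use the observation stated just above the proposition, namely that for a nonempty cell $C=(i_1,j_1),\ldots,(i_l,j_l)$ one has $|S(C)|-l(C)-1=D(C)$, where $D(C)$ counts the indices $m$ with both $i_{m+1}>i_m$ and $j_{m+1}>j_m$. Setting $u=q-1$, this rewrites the weight as $(q-1)^{|S|-l}(-1)^{|S|}=u(-u)^{D(C)}(-1)^{l(C)-1}$. Together with the empty cell's contribution of $1$, the claim reduces to
$$u\sum_{C\ne\emptyset}(-u)^{D(C)}(-1)^{l(C)-1}=q^{\min(r,s)}-1.$$

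Next, I would decouple the starting point $(i_1,j_1)\in[1,r]\times[1,s]$ from the shape of the cell by translation. Shifting by $-(i_1,j_1)$ puts a nonempty cell in bijection with a pair consisting of $(i_1,j_1)$ and a monotone sequence of distinct lattice points $(0,0)=P_0,P_1,\ldots,P_M$ contained in $[0,r-i_1]\times[0,s-j_1]$, with $M=l-1$ and $D$ unchanged. Let $V_{a,b}$ be the sum of $(-1)^M(-u)^D$ over all such sequences (including the empty one, which contributes $1$) inside $[0,a]\times[0,b]$. The left-hand side of the target then becomes $u\sum_{(i_1,j_1)\in[1,r]\times[1,s]}V_{r-i_1,s-j_1}$. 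Conditioning on whether $P_0$ is followed by another point, and on the type (right, up, or diagonal) of the first step, gives the recursion
$$V_{a,b}=1-\sum_{i=0}^{a-1}V_{i,b}-\sum_{j=0}^{b-1}V_{a,j}+u\sum_{i=0}^{a-1}\sum_{j=0}^{b-1}V_{i,j}.$$

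Multiplying by $x^ay^b$ and summing over $a,b\ge 0$, one finds that the generating function $\Psi(x,y)=\sum_{a,b}V_{a,b}x^ay^b$ satisfies
$$\Psi(x,y)\cdot\frac{(1-x)(1-y)+x(1-y)+y(1-x)-uxy}{(1-x)(1-y)}=\frac{1}{(1-x)(1-y)}.$$
The numerator simplifies via $(1-x)(1-y)+x(1-y)+y(1-x)-uxy=1-xy(1+u)$, so
$$\Psi(x,y)=\frac{1}{1-xy(1+u)}=\sum_{k\ge 0}q^kx^ky^k,$$
whence $V_{a,b}=q^a$ when $a=b$ and $V_{a,b}=0$ otherwise. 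Substituting back, only pairs with $r-i_1=s-j_1$ contribute; letting $k=r-i_1=s-j_1$ range over $0,1,\ldots,\min(r,s)-1$ gives the geometric sum $u\sum_{k=0}^{\min(r,s)-1}q^k=q^{\min(r,s)}-1$, as required. The main obstacle is the third step: the striking vanishing $V_{a,b}=0$ for $a\ne b$ rests entirely on the clean collapse of the recursion's symbol to $1-xy(1+u)$, and is what ultimately ensures that the answer depends only on $\min(r,s)$ rather than on $r$ and $s$ separately.
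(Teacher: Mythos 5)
Your proof is correct, but it proceeds by a genuinely different route than the paper. The paper's argument is a sign-reversing involution: it splits the cells into ``diagonal'' ones (all $i_m=j_m$) and the rest; for a non-diagonal cell it toggles membership of the point $(p,p)$, where $p$ is the least index occurring at an off-diagonal position, which preserves $|S(C)|-l(C)$ but flips the sign $(-1)^{|S(C)|}$, so these contributions cancel; what remains is a direct binomial sum $\sum_l\binom{\min(r,s)}{l}(q-1)^l=q^{\min(r,s)}$ over diagonal cells. You instead decouple the starting point, set up the recursion
$$V_{a,b}=1-\sum_{i<a}V_{i,b}-\sum_{j<b}V_{a,j}+u\sum_{i<a,\,j<b}V_{i,j},$$
and solve it by generating functions, discovering the stronger local identity $V_{a,b}=q^a\delta_{a,b}$. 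Both arguments are sound. The paper's involution is shorter and makes the cancellation structurally transparent, while your generating-function computation is more mechanical but actually establishes a sharper statement (the vanishing of $V_{a,b}$ for $a\ne b$, which is where ``$\min(r,s)$'' ultimately comes from), and would generalize more readily to weighted variants of the staircase triangulation.
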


\begin{proof}{}
We divide the cells into two types. The set $A$ of cells for which there exists
$m$ such that $i_m\ne j_m$ and the set $B$ of cells for which $i_m=j_m$ for all
$m$, including the empty one.
We show that the total contribution in the sum coming from the cells in $A$
is zero. Denote by $p$ the minimum of all
$\min(i_m,j_m)$ for which $i_m\ne j_m$. We distinguish two types in the set $A$.
\begin{itemize}
\item[(i)] The point $(p,p)$ is contained in the cell.
\item[(ii)] The point $(p,p)$ is not contained in the cell.
\end{itemize}
We can map the cells from $A(i)$ one-to-one to $A(ii)$ by deletion of the
point $(p,p)$. In the process the difference $|S(C)|-l(C)$ does not change but the
value of $|S(C)|$ changes by one. Hence the terms that are paired by this
mapping cancel. So it remains to compute the contribution from the cells
of $B$. They are all of the form $(i_1,i_1),\ldots,(i_l,i_l)$ and the
length is at most $\min(r,s)$. For each cell in $B$ the number $|S|=2l$ is
even and also, $|S|-l=l$. The number of cells in $B$ of length $l$
equals ${\min(r,s)\choose l}$. Hence the sum reads
$$\sum_{l=0}^{\min(r,s)}{\min(r,s)\choose l}(q-1)^{l}=(q-1+1)^{\min(r,s)}=q^{\min(r,s)}\;,$$
as asserted.
\end{proof}

\begin{proposition}\label{summationmain}
With the above notations we have
$$\sum_C (q-1)^{r+s-l(C)-1}=\sum_{m=0}^{\min(r-1,s-1)}{r-1\choose m}{s-1\choose m}q^{r+s-m-1}\;,$$
where the summation extends over all cells $C$ of ${\cal T}_{rs}$, including the
empty one.
\end{proposition}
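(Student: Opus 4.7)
The plan is to reformulate the identity as a classical hypergeometric transformation. Let $N_l$ denote the number of cells of length $l$ in ${\cal T}_{rs}$, so that the left-hand side equals $\sum_l N_l(q-1)^{r+s-1-l}$. After the change of variable $y=1/q$ and multiplying both sides by $y^{r+s-1}$, the identity is equivalent to
$$\sum_l N_l\, y^l (1-y)^{r+s-1-l} = \sum_m \binom{r-1}{m}\binom{s-1}{m} y^m\;.$$

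First I would compute $N_l$ in closed form. A cell of length $l$ is determined by an ordered pair of multisets $(I,J)$ of size $l$ in $\{1,\dots,r\}$ and $\{1,\dots,s\}$ respectively (the sorted $i$- and $j$-coordinates), subject to the constraint that no consecutive sorted pairs coincide. An unconstrained multiset of size $l$ in $\{1,\dots,r\}$ is counted by $\binom{r+l-1}{l}$; applying inclusion--exclusion over the $l-1$ possible positions where a coincidence can occur (each coincidence merges two consecutive slots, effectively dropping the common size by one) gives
$$N_l = \sum_{k=0}^{l-1}(-1)^k \binom{l-1}{k}\binom{r+l-k-1}{l-k}\binom{s+l-k-1}{l-k}$$
for $l\ge 1$, together with $N_0=1$.

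Next I would substitute this expression into the left-hand side and rearrange, letting $n=l-k$. The inner sum over $k$ is then $\sum_{k\ge 0}(-1)^k\binom{n+k-1}{k}\bigl(y/(1-y)\bigr)^k = (1-y)^n$ by the negative-binomial series, so the left-hand side collapses to
$$(1-y)^{r+s-1}\sum_{n\ge 0}\binom{r+n-1}{n}\binom{s+n-1}{n}y^n\;.$$
Recognising the series as ${}_2F_1(r,s;1;y)$, I would invoke Euler's transformation ${}_2F_1(a,b;c;y) = (1-y)^{c-a-b}{}_2F_1(c-a,c-b;c;y)$ with $a=r$, $b=s$, $c=1$, which yields precisely $\sum_m\binom{r-1}{m}\binom{s-1}{m}y^m$ since $(1-r)_m(1-s)_m/(m!)^2 = \binom{r-1}{m}\binom{s-1}{m}$.

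The main obstacle is the inclusion--exclusion step: making precise that cells of length $l$ correspond bijectively to pairs of multisets with no consecutive coincidence, and that specifying $k$ coincidence positions cleanly counts $\binom{r+l-k-1}{l-k}\binom{s+l-k-1}{l-k}$ pairs. Once that bookkeeping is settled, the remainder reduces to a routine manipulation of formal power series plus a single appeal to the classical Euler transformation.
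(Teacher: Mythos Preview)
Your argument is correct. The inclusion--exclusion step is sound: since the sequences $(i_m)$ and $(j_m)$ are weakly increasing, two pairs in a cell coincide if and only if two \emph{consecutive} pairs do; forcing coincidences at any $k$ of the $l-1$ consecutive positions collapses $\{1,\dots,l\}$ into exactly $l-k$ blocks regardless of which positions are chosen, and each block contributes one free value to each multiset, giving the count $\binom{r+l-k-1}{l-k}\binom{s+l-k-1}{l-k}$. The subsequent series manipulation and the appeal to Euler's transformation are routine (everything is valid as formal power series in $y$, and since both sides of the original identity are polynomials in $q$, equality for small $|y|$ suffices).

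Your route, however, is quite different from the paper's. The paper proceeds by double induction: denoting the left and right sides by $A_{rs}$ and $B_{rs}$, it verifies the boundary values $A_{1s}=B_{1s}=q^s$ and $A_{r1}=B_{r1}=q^r$, and then shows combinatorially (by partitioning cells according to whether they lie in ${\cal T}_{r-1,s}$, ${\cal T}_{r,s-1}$, or end at $(r,s)$) that $A_{rs}=qA_{r-1,s}+qA_{r,s-1}-(q^2-q)A_{r-1,s-1}$, and checks by a binomial-coefficient computation that $B_{rs}$ satisfies the same recurrence. The paper's argument is entirely elementary and self-contained, requiring no external identity. Your approach, by contrast, produces a closed form for $N_l$ and reveals that the proposition is, after a change of variable, literally an instance of Euler's ${}_2F_1$ transformation---a pleasing observation in a paper about hypergeometric functions, though it trades self-containment for this structural insight.
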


\begin{proof}{}
Denote the sum on the left by $A_{rs}$ and the sum on the right by $B_{rs}$.
When $r=1$ the number of cells
of length $l$ is equal to ${s\choose l}$. Hence
$$A_{1s}=\sum_{l=0}^s{s\choose l}(q-1)^{1+s-l-1}=q^s\;,$$
which equals $B_{1s}$. is the desired answer when $r=1$. Similarly we have
$A_{r1}=B_{r1}$ for all $r$.

We claim that for all $r,s>1$,
$$A_{rs}=qA_{r-1,s}+qA_{r,s-1}-(q^2-q)A_{r-1,s-1}\;.$$
The summation over the cells of ${\cal T}_{rs}$ consists of the cells
in ${\cal T}_{r-1,s}$, the cells ${\cal T}_{r,s-1}$ and the cells
with $(i_l,j_l)=(r,s)$. The contribution to
$A_{rs}$ from ${\cal T}_{r-1,s}$ equals $(q-1)A_{r-1,s}$ because
$k$ is increased by $1$. Similarly we get a contribution from ${\cal T}_{r,s-1}$.
However, cells in ${\cal T}_{r-1,s-1}$ have
been counted doubly. So we have to subtract the latter contribution once, which
equals $(q-1)^2A_{r-1,s-1}$. This gives the contribution
$$(q-1)A_{r-1,s}+(q-1)A_{r,s-1}-(q-1)^2A_{r-1,s-1}\;.$$
The cells with endpoint $(r,s)$ have
not been counted yet. They arise by adding $(r,s)$ to the cells
in ${\cal T}_{r-1,s-1},{\cal T}_{r-1,s}$ and ${\cal T}_{r,s-1}$. The length of the cell increases
by $1$ and $k$ increases by 1, except with ${\cal T}_{r-1,s-1}$ in which case $k$
increases by $2$. Using inclusion-exclusion we arrive at a contribution
$$A_{r-1,s}+A_{r,s-1}-(q-1)A_{r-1,s-1}\;.$$
Our claim now follows. We see that the $A_{rs}$ are uniquely determined
by the recursion and $A_{r1}=q^r,A_{1s}=q^s$. 

It remains to verify
that $B_{rs}$ satisfies the same recursion.
Consider $B_{rs}-qB_{r-1,s}-qB_{r,s-1}+(q^2-q)B_{r-1,s-1}$ for $r,d\ge1$.
It equals
\begin{eqnarray*}
&&\sum_{m\ge0}\left({r-1\choose m}{s-1\choose m}-{r-2\choose m}{s-1\choose m}
-{r-1\choose m}{s-2\choose m}+{r-2\choose m}{s-2\choose m}\right)q^{r+s-m-1}\\
&&-{r-2\choose m}{s-2\choose m}q^{r+s-m-2}.
\end{eqnarray*}
where we use the convention that we sum over all $m\ge0$ and use ${a\choose m}=0$
if $a$ is an integer $<m$. 
The terms of the sum on the first line are easily seen to equal
${r-2\choose m-1}{s-2\choose m-1}q^{r+s-m-1}$ if $m\ge1$ and $0$ if $m=0$. So we
are left with
$$\sum_{m\ge1}{r-2\choose m-1}{s-2\choose m-1}q^{r+s-m-1}
-\sum_{m\ge0}{r-2\choose m}{s-2\choose m}q^{r+s-m-2}\;,$$
which equals $0$. Hence $B_{rs}$ satisfies the recurrence and the equality $A_{rs}=B_{rs}$ holds
for all $r,s$.
\end{proof}

\begin{proposition}\label{summationmaximal}
With the above notation we have
$$\sum_{S(C)=r+s} (q-1)^{r+s-l(C)-1}=\sum_{m=0}^{\min(r-1,s-1)}{r-1\choose m}{s-1\choose m}q^m\;,$$
where the summation extends over all cells $C$ of ${\cal T}_{rs}$ with $|S(C)|=r+s$.
\end{proposition}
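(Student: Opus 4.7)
The plan is to identify the maximal-support cells of ${\cal T}_{rs}$ with lattice paths from $(1,1)$ to $(r,s)$ using east (E), north (N), and diagonal (D) steps, and then prove the identity by establishing a common recurrence.

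A max-support cell $(i_1,j_1)\le\cdots\le(i_l,j_l)$ has $i$-coordinates hitting every element of $\{1,\ldots,r\}$ in non-decreasing order, and similarly for $j$. This forces $i_1=j_1=1$, $i_l=r$, $j_l=s$, and the per-step increment in each coordinate to be $0$ or $1$ with at least one being $1$ (since consecutive pairs are distinct). If $m$ denotes the number of diagonal steps, the path has $r{-}1{-}m$ E-steps and $s{-}1{-}m$ N-steps, so $l=r+s-1-m$ and $r+s-l-1=m$. The number of such paths with $m$ diagonal steps is the multinomial coefficient $\binom{r+s-2-m}{r-1-m,\,s-1-m,\,m}$, so the claim reduces to
\[
  \sum_{m\ge 0}\binom{r+s-2-m}{r-1-m,\,s-1-m,\,m}(q-1)^m
  =\sum_{m\ge 0}\binom{r-1}{m}\binom{s-1}{m}q^m.
\]

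Write $g_{rs}(q)$ for the left-hand side and $f_{rs}(q)$ for the right-hand side. I would show that both satisfy the recurrence
\[
  h_{rs}=h_{r-1,s}+h_{r,s-1}+(q-1)\,h_{r-1,s-1}
\]
with boundary values $h_{1,s}=h_{r,1}=1$. For $g_{rs}$ this is immediate from the lattice-path picture: stratify cells by the type of their last step into $(r,s)$. In the E and N cases, removing $(r,s)$ yields a max-support cell of ${\cal T}_{r-1,s}$, respectively ${\cal T}_{r,s-1}$, with $l$ and $r+s$ each dropping by $1$, preserving the exponent and contributing $g_{r-1,s}+g_{r,s-1}$. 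In the D case $l$ drops by $1$ while $r+s$ drops by $2$, so the exponent $r+s-l-1$ decreases by one and we get an extra factor $q-1$, producing $(q-1)g_{r-1,s-1}$. For $f_{rs}$ the same recurrence follows from two applications of Pascal's identity: expanding both factors of $\binom{r-1}{m}\binom{s-1}{m}$ and collecting terms, the difference $f_{rs}-f_{r-1,s}-f_{r,s-1}-(q-1)f_{r-1,s-1}$ reduces to
\[
-q\sum_m\binom{r-2}{m}\binom{s-2}{m}q^m+\sum_m\binom{r-2}{m-1}\binom{s-2}{m-1}q^m,
\]
which vanishes after shifting the summation index in the second sum. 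The base case is trivial: for $r=1$ the unique max-support cell is $(1,1),(1,2),\ldots,(1,s)$ of length $s$, giving $(q-1)^0=1$, matching $\binom{0}{0}\binom{s-1}{0}=1$.

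The main obstacle is bookkeeping rather than conceptual: one must carefully check that the classification of max-support cells by the type of their last step is exhaustive and disjoint, and that the lattice-path bijection preserves exactly the quantities $|S(C)|$ and $l(C)$ as claimed. With this dictionary in hand, both the combinatorial recursion for $g_{rs}$ and the algebraic recursion for $f_{rs}$ are routine, and the proof is completed by induction on $r+s$.
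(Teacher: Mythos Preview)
Your proof is correct and follows essentially the same approach as the paper: both characterize the maximal cells as lattice paths from $(1,1)$ to $(r,s)$ with steps $(1,0),(0,1),(1,1)$, establish the recurrence $D_{rs}=D_{r-1,s}+D_{r,s-1}+(q-1)D_{r-1,s-1}$ for the left-hand side by stratifying on the last step, check the boundary values $D_{1s}=D_{r1}=1$, and verify that the right-hand side satisfies the same recurrence via Pascal's identity (the paper defers this last step to ``an argument as in the previous proposition''). Your version is slightly more explicit, in that you also write down the multinomial count of paths with a fixed number of diagonal steps, but this extra formula is not needed for the argument.
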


We call cells with $|S(C)|=r+s$ {\it maximal cells}. They correspond to the 
cellular cones that are not contained in ${\cal H}$.
Although these maximal cells do not contribute to the pointcount on
$\overline{V_{\lambda}}$ it is convenient to include them in summations over the
cells of ${\cal T}_{rs}$.
\medskip

\begin{proof}{}The cells $C$ with $|S(C)|=r+s$ are given by $(i_1,j_1),\cdots,(i_l,j_l)$
with $(i_{m+1},j_{m+1})-(i_m,j_m)\in\{(1,0),(0,1),(1,1)\}$ for all $m$
and $(i_1,j_1)=(1,1)$ and $(i_l,j_l)=(r,s)$.
Let $D_{rs}$ be the corresponding
sum. We claim that
$$D_{rs}=D_{r-1,s}+D_{r,s-1}+(q-1)D_{r-1,s-1}$$
for all $r,s>1$. This is a consequence of the fact that maximal cells in ${\cal T}_{rs}$
arise by adding $(r,s)$ to a maximal cell in ${\cal T}_{r-1,s}$,
${\cal T}_{r,s-1}$ or ${\cal T}_{r-1,s-1}$. It is clear that $D_{1s}=D_{r1}=1$.
Together with the recurrence and an argument as in the previous proposition
we arrive at our assertion.
\end{proof}

According to Proposition \ref{componentcount} we associate to any cell $C$ the counting number
$$
N(C)={1\over q}(q-1)^{r+s-l(C)-2}+{(q-1)^{|S(C)|-l(C)}\over q(q-1)}\sum_{m=0}^{q-2}
(-1)^{|S(C)|}\delta(a_{S(C)}m)g(\v pm,-\v qm)\omega(\epsilon\lambda)^m\;,$$
where
$$g(\v pm,-\v qm)=\prod_{i=1}^rg(p_im)\prod_{j=1}^sg(-q_jm).$$
In particular, if $|S(C)|=l(C)=0$ ($C$ empty), then this number coincides
with $|V_{\lambda}(\fqstar)|$ according to Proposition \ref{affinepointcount}.
To obtain the cardinality of $\overline{V_{\lambda}}(\bbbf_q)$, we take the sum of $N(C)$ over
all cells $C$ with $|S(C)|\le r+s-2$. A straightforward check shows that our definition
gives $N(C)=0$ if $|S(C)|=r+s-1$. Hence we can take the sum of $N(C)$ over all cells $C$ and subtract
the contribution from the maximal cells.

We first sum over all $C$ and do this term by term in the above expression for $N(C)$.
The summation of ${1\over q}(q-1)^{r+s-l(C)-2}$ yields, according to Proposition
\ref{summationmain},
$${1\over q-1}\sum_{m=0}^{\min(r-1,s-1)}{r-1\choose m}{s-1\choose m}q^{r+s-m-2}\;.$$

Now choose the $m$-th term in the Fourier sum. Let $I(m)$ be the set of $i$ such that $p_im\is0
\mod{\q}$ and $J(m)$ the set of $j$ with $q_jm\is0\mod{\q}$.
Since $\delta(a_{S(C)}m)$ is only non-zero if
$S_x(C)\subset I(m)$ and $S_y(C)\subset J(m)$, we must sum over all such $C$. 
According to Proposition \ref{summationterm} the sum of $(-1)^{S(C)}(q-1)^{|S(C)|-l(C)}$ over such cells is equal
to $q^{\min(|I(m)|,|J(m)|)}$. The total contribution in the sum of the $N(C)$ reads
$${1\over q-1}q^{\min(|I(m)|-1,|J(m)|-1)}g(p_1m)\cdots g(p_rm)g(-q_1m)\cdots g(-q_sm)\;.$$
Note that $\min(|I(m)|,|J(m)|)$ equals $s(m)$, which occurs in Theorem \ref{rewriteZ}.

We now subtract the contributions coming from the maximal cells. If $C$ is maximal,
then $a_{S(C)}=1$ since the gcd of all $p_i,q_j$ is $1$.
Hence $\delta(a_{S(C)}m)=1$ if and only if $m=0$, and $0$ otherwise.
So, for a maximal cell $C$ we get
$$N(C)={(q-1)^{r+s-l(C)-1}\over q}((q-1)^{-1}+1)=(q-1)^{r+s-l(C)-2}\;.$$
According to Proposition \ref{summationmaximal} summation over these terms
gives
$${1\over q-1}\sum_{m=0}^{\min(r-1,s-1)}{r-1\choose m}{s-1\choose m}q^m\;.$$
Together these contributions yield
$$\sum_{m=0}^{\min(r-1,s-1)}{r-1\choose m}{s-1\choose m}{q^{r+s-m-2}-q^m\over q-1}
+{1\over q-1}\sum_{m=0}^{q-2}q^{s(m)-1}g(\v pm,-\v qm)\omega(\epsilon\lambda)^m\;.$$
Combination of this result with Theorem \ref{rewriteZ} yields Theorem \ref{main}.

\section{Some alternative varieties}
Let $k=r+s$. Notice that the dimension of the variety $V_{\lambda}$ equals $k-3$.
In some cases this
is higher than expected. For example, if $\boldalpha=\{1/2,1,2\}$ and
$\boldbeta=\{1,1\}$ one would get $p_1=p_2=2,q_1=q_2,q_3=q_4=1$.
So $k=6$ and the dimension of $V_{\lambda}$ equals $3$. However, one would expect
that the Legendre family of elliptic curves is associated to this particular choice
of $\boldalpha$ and $\boldbeta$. One can remedy this by noticing that the
set $\{2,2,-1,-1,-1,-1\}$ can be divided into two sets $\{2,-1,-1\}$ both of
whose elements sum up to $0$. In such cases the dimension of $V_{\lambda}$ can be
reduced by considering an alternative variety.

Let us write $(a_1,\ldots,a_k)=(p_1,\ldots,p_r,-q_1,\ldots,-q_s)$.
Suppose $\{1,\ldots,k\}$ is a union of disjoint subsets $K_1,\ldots,K_l$ such
that $\sum_{r\in K_i}a_r=0$ for $i=1,\ldots,l$. Let $A_i=(a_j)_{j\in K_i}$ for $i=1,\ldots,l$.
Define the variety ${\cal V}_{\lambda}$ as the subvariety of $\bbbp^{|K_1|-1}\times
\cdots\times\bbbp^{|K_l|-1}$ given by the equations
$$\sum_{i\in K_1}x_i=\cdots=\sum_{i\in K_l}x_i=0,\qquad \lambda x_1^{a_1}\cdots x_k^{a_k}=\epsilon\;.$$
Note that ${\cal V}_{\lambda}$ has dimension $k-2l-1$. In our particular example we get
dimension one.

\begin{theorem}\label{legendrecount} We use the above notation. In addition we assume that
the gcd of the elements $\{a_i|i\in K_j\}$ is one for all $j=1,\ldots,l$. Then
$$|{\cal V}_{\lambda}(\fqstar)|={1\over q-1}\prod_{j=1}^l Q_{|K_j|}(q)
+{1\over q^l(q-1)}\sum_{m=1}^{q-2}g(a_1m)\cdots g(a_km)
\ \omega(\epsilon\lambda)^m\;,$$
where $Q_r(x)=((x-1)^{r-1}+(-1)^r)/x$.
\end{theorem}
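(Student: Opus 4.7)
The plan is to follow the template of Proposition \ref{affinepointcount}, but now with $l$ block-linear constraints instead of one. I set $F(\lambda)=(q-1)^l|{\cal V}_{\lambda}(\fqstar)|$, which counts tuples $\v x\in(\fqstar)^k$ satisfying all $l$ equations $\sum_{i\in K_j}x_i=0$ together with $\lambda\v x^{\v a}=\epsilon$ (the $(q-1)^l$ absorbs the per-block rescaling action). Inserting $q^{-1}\sum_{v_j\in\bbbf_q}\psi_q(v_j\sum_{i\in K_j}x_i)$ for each block constraint gives
$$q^lF(\lambda)=\sum_{\v x\in(\fqstar)^k,\ \lambda\v x^{\v a}=\epsilon}\ \sum_{v_1,\ldots,v_l\in\bbbf_q}\prod_{j=1}^l\psi_q\Bigl(v_j\sum_{i\in K_j}x_i\Bigr).$$

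Using multiplicative Fourier inversion (Proposition \ref{fourier}), I compute $F_m=\sum_{\lambda\in\fqstar}F(\lambda)\omega(\lambda)^{-m}$ by substituting $\lambda=\epsilon\v x^{-\v a}$. Because the $K_j$ partition $\{1,\ldots,k\}$, both the product of additive characters and $\omega(\v x^{\v a})^m=\prod_j\omega\bigl(\prod_{i\in K_j}x_i^{a_i}\bigr)^m$ factor across $j$, yielding
$$q^lF_m=\omega(\epsilon)^{-m}\prod_{j=1}^lT_j(m),\qquad T_j(m)=\sum_{v_j\in\bbbf_q,\ (x_i)_{i\in K_j}\in(\fqstar)^{|K_j|}}\psi_q\Bigl(v_j\sum_{i\in K_j}x_i\Bigr)\omega\Bigl(\prod_{i\in K_j}x_i^{a_i}\Bigr)^m.$$

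To evaluate $T_j(m)$, I pick some $i_1\in K_j$ and substitute $x_i=x_{i_1}\tilde x_i$ for $i\in K_j\setminus\{i_1\}$; the relation $\sum_{i\in K_j}a_i=0$ cancels $x_{i_1}$ from the multiplicative character, and the further substitution $u=v_jx_{i_1}$ detaches a factor $(q-1)$ and puts the inner sum exactly into the form of Lemma \ref{gaussproduct} with $n+1=|K_j|$ and implicit $a_{n+1}=a_{i_1}$. The per-block gcd hypothesis forces $\gcd_{i\in K_j\setminus\{i_1\}}(a_i)=1$, since any common divisor also divides $a_{i_1}=-\sum_{i\neq i_1}a_i$, so the $\delta$-term reduces to $\delta(m)$ and I obtain
$$T_j(m)=(q-1)\Bigl[(q-1)^{|K_j|-1}\delta(m)+\prod_{i\in K_j}g(a_im)\Bigr].$$

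At $m=0$, the identity $g(0)=-1$ gives $T_j(0)=(q-1)\bigl[(q-1)^{|K_j|-1}+(-1)^{|K_j|}\bigr]=q(q-1)Q_{|K_j|}(q)$, which is precisely what supplies the polynomial term $\prod_jQ_{|K_j|}(q)$. For $m\neq 0$ the block contribution is $(q-1)\prod_{i\in K_j}g(a_im)$. Assembling, applying inverse Fourier, using $\omega(\epsilon)^{-m}=\omega(\epsilon)^m$ (because $\epsilon\in\{\pm1\}$), and dividing by $(q-1)^l$ produces the stated formula. The main obstacle I anticipate is the combinatorial bookkeeping: verifying that the per-block gcd hypothesis is exactly the right input to kill spurious $\delta$-contributions at all $m\not\equiv 0\pmod{q-1}$, and that the factor $Q_r(q)=((q-1)^{r-1}+(-1)^r)/q$ drops out cleanly from the $m=0$ summand of each block; the remaining substitutions are routine in view of Lemma \ref{gaussproduct}.
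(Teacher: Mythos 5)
Your proposal is correct and follows essentially the same route as the paper: detect the $l$ block-linear equations with additive character averages, Fourier-expand in $\lambda$, factor the sum over the blocks $K_j$, and apply Lemma \ref{gaussproduct} to each block, with the per-block gcd hypothesis reducing the $\delta$-term to $\delta(m)$ and $g(0)=-1$ producing the factors $qQ_{|K_j|}(q)$ at $m=0$. The only (cosmetic) difference is that you keep the homogeneous count on $(\fqstar)^k$ and divide by $(q-1)^l$, using the substitutions $x_i=x_{i_1}\tilde x_i$, $u=v_jx_{i_1}$ inside each block, whereas the paper dehomogenizes by setting $x_{k_j}=1$ from the start; your bookkeeping of the $(q-1)^l$ factor is in fact cleaner than the paper's.
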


\begin{proof}{}For each $K_i$ we choose $k_i\in K_i$ and set $x_{k_i}=1$ (dehomogenization).
We introduce the short-hand notation $\Sigma_j=1+\sum_{i\in K_j,i\ne k_j}x_i$. Then
$$q^l|{\cal V}_{\lambda}(\fqstar)|=
\sum_{u_1,\ldots,u_l\in\bbbf_q,\v x\in(\fqstar)^k,\lambda\v x^{\v a}=1}
\psi_q(u_1\Sigma_1+\cdots+u_l\Sigma_l)\;.$$
The latter sum is a function of $\lambda\in\fqstar$ and the $m$-th coefficient of its
Fourier series reads
$$c_m={1\over q-1}\sum_{u_1,\ldots,u_l\in\bbbf_q,\lambda,x_i\in\fqstar,\lambda\v x^{\v a}=1}
\psi_q(u_1\Sigma_1+\cdots+u_l\Sigma_l)\ \omega(\lambda)^{-m}\;.$$
In the summation over $\v x$ we let $x_{k_i}=1$ for $i=1,\ldots,l$.
Substitution of $\lambda=\epsilon\v x^{-\v a}$ yields
$$c_m={1\over q-1}\sum_{u_1,\ldots,u_l\in\bbbf_q,x_i\in\fqstar}
\psi_q(u_1\Sigma_1+\cdots+u_l\Sigma_l)\ \omega(\epsilon\v x^{\v a})^{m}\;.$$
We now use Lemma \ref{gaussproduct} to each of the sets $K_j$ to get
$$c_0={q^{l}\over q-1}(q-1)^l\prod_{j=1}^lQ_{|K_j|}(q)$$
and for $m\not\is0\mod{\q}$,
$$c_m=(q-1)^{l-1}g(a_1m)\cdots g(a_km)\;.$$
Thus we get
$$|{\cal V}_{\lambda}(\fqstar)|=
{1\over q-1}\prod_{j=1}^lQ_j(q)+{1\over q^l(q-1)}\sum_{m=1}^{q-2}g(a_1m)\cdots
g(a_km)\omega(\epsilon\lambda)^m\;,$$
as asserted.
\end{proof}

\end{document}